\newtheorem{thm}{Theorem}[section]
\newtheorem{lmm}[thm]{Lemma}
\theoremstyle{definition}
\newtheorem{dfn}[thm]{Definition}
\newtheorem{rem}[thm]{Remark}
\title{The Borel transform of the WKB solution to the Pearcey system }
\author{Takashi \textsc{Aoki}\footnote{Kindai University, Kowakae 3-4-1, Higashi-Osaka, 
577-8502, Japan.\newline e-mail: \texttt{aoki@math.kindai.ac.jp}},
          Takao \textsc{Suzuki}\footnote{Kindai University, Kowakae 3-4-1, Higashi-Osaka, 
577-8502, Japan.\newline e-mail: \texttt{suzuki@math.kindai.ac.jp}}
~and Shofu \textsc{Uchida}\footnote{Kindai University, Kowakae 3-4-1, Higashi-Osaka, 
577-8502, Japan.\newline e-mail: \texttt{uchida@math.kindai.ac.jp }}}
\date{}
\begin{document}
\maketitle
\begin{abstract}
We study the system of partial differential equations which characterizes the Pearcey integral from the viewpoint of the exact WKB analysis.
It is shown that the Borel transform of the WKB solutions to the system can be written as a linear combination of the branches of an algebraic function of degree $4$. Moreover, some connection formulas for the system are given. \\
\newline
{{\it 2020 Mathematics Subject Classification:} Primary 33C70; Secondary 34E20, 34M60.}\\
{{\it Keywords:} Pearcey system, Borel transform, Stokes set, Semi-algebraic set, WKB solution, Algebraic function.}
\end{abstract}
\section{Introduction}
The Pearcey integral is a type of oscillatory integral. This integral is utilized in the analysis of the structure of electromagnetic fields near a cusp of a caustic (\cite{Pea}). It is considered as a two--variable generalization of the Airy integral. \par
In \cite{OK}, particular solutions of the $2$--dimensional Garnier system and its confluent equations are discussed. The system represented by $LH(5)$ in \cite{OK} is obtained by such a confluence. It consists of three second-order
partial differential equations and 
characterizes the linear subspace spanned by the Pearcey integrals. 
On the other hand, another type of system of partial differential equations is investigated in \cite{A2, Hirose} to analyze the Pearcey integral with a large parameter.
This system is essentially equivalent to $LH(5)$ and it includes a partial differential equation of third order.
If one of the independent variable is fixed, this equation is reduced to the differential equation investigated by Berk--Nevins--Roberts \cite{BNR},
where the notion of new Stokes curves is introduced. 
This notion is well understood by using the exact WKB analysis (\cite{AKT, HKT, KT}).
Some basic definitions, such as WKB solutions, turning points, etc.,
for the exact WKB analysis in two variables are introduce in \cite{A2}
and Hirose investigates intensively the connection problem for WKB solutions to the system in \cite{HiroseM, Hirose}.  \par
We introduce a large parameter in $LH(5)$ so that the Pearcey integral with the large parameter represents the solution space.
We also add an extra partial differential equation which represents the weighted homogeneity of the integral with respect to the independent variables and the large parameter. We call this extended system the Pearcey system. 
The aim of this article is to study the relationship between the Pearcey integral and the Borel sum of the WKB solutions to the Pearcey system by using algebraic functions. 
We will show that the Borel transform of the WKB solutions can be written down as linear combinations of the branches of an algebraic function defined by a quartic equation.
As a corollary, we see that the WKB solutions are resurgent functions (\cite{E}).
Using the algebraic equation, we find explicit forms of the connection formulas of Voros type for the WKB solutions (\cite{V}).
Of course these connection formulas are essentially known by \cite{Hirose}, however, our method is quite elementary and we do not use the local reduction theorem at a simple turning point \cite{AKT, AKKoT}.
We also obtain an explicit equation of the Stokes set of the system of partial differential equations for the Pearcey integral.
Some part of the results of this article is announced in \cite{ASU}. \par
The plan of this article is as follows. In section 2, we introduce the Pearcey system
of partial differential equations as a holonomic system that characterizes the Pearcey
integral with the large parameter. We recall the definition and the construction of
WKB solutions to the Pearcey system in the third section. 
This construction is originally given in \cite{A2, Hirose}, but here we treat the large parameter as an independent variable and this treatment gives a constraint on the normalization of the integral in the definition of the WKB solutions.
In the fourth section, we recall the definition of the turning point set and the Stokes set.
We show that the Stokes set is described by using an algebraic function of degree 6.
In section 5, we give the expression of the Borel transform of the WKB solutions in terms of an algebraic function of degree 4. Using this expression, we can take analytic continuation of the Borel transform of the WKB solutions and hence we obtain connection formulas for the WKB solutions. These are given in the last section as well as some examples.
\section{Pearcey system}
We consider the following integral
\begin{align}\label{intPe}
u= \int \exp \left(\eta(z^4+x_2z^2+x_1 z)\right)  dz.
\end{align}
It is called the Pearcey integral (\cite{O}) with a large parameter $\eta$.
Here the integration is taken over an infinite path connecting two
distinct valleys of the integrand. It is easy to see that $u$ satisfies the
following system (cf. \cite[p.63 (0.4)]{OK}):
\begin{align}\label{Pearceysys1}
P_i u=0 \quad(i=1,2,3),
\end{align}
where
\begin{align*}
{P}_1&=4\partial_1 \partial_2+2 \eta  x_2 \partial_1 +\eta^2 x_1,\\
{P}_2&=4 \partial_2^2+  \eta x_1 \partial_1 +2 \eta x_2  \partial_2 +\eta,\\
{P}_3&=\eta \partial_{2}- \partial_{1}^{2}.
\end{align*}
This is  equivalent to
\begin{align}\label{Pearceysys2}
Q_i u=0 \quad(i=1,2),
\end{align}
where
\begin{align*}
{Q}_1&=4\partial_1^3+2 \eta^2  x_2 \partial_1 +\eta^3 x_1,\\
{Q}_2&={P}_3,
\end{align*}
because
\begin{align*}
\begin{split}
&P_1=\eta^{-1}\left(Q_1+4 \partial_1 Q_2\right) \\
&P_2=\eta^{-2} \partial_1 Q_1+\left(4 \eta^{-2} Q_2+8 \eta^{-2} \partial_1^2+2 x_2\right) Q_2, \\
&Q_1=\eta P_1-4 \partial_1 P_3.
\end{split}
\end{align*}
Note that $P_2=\eta^{-1} \partial_1 P_1+2\left(2 \eta^{-1}  \partial_2+x_2\right) P_3$ and that $[P_j , \eta]=0$ ($j = 1, 2, 3$) and $[Q_k , \eta]=0$ ($k = 1, 2$). System \eqref{Pearceysys2} is considered in \cite{A2, Hirose}. We study the system \eqref{Pearceysys2} from a viewpoint slightly different from that of \cite{A2, Hirose}.
We consider $\eta$ as an independent complex variable. Then the system \eqref{Pearceysys1} is subholonomic (\cite{K,Oaku2, Oaku}).
To make a holonomic system for $u$, we look at the weighted homogeneity of the integral  \eqref{intPe}, namely, $u\left(\lambda^{3} x_{1}, \lambda^{2} x_{2}, \lambda^{-4} \eta\right)=\lambda u\left(x_{1}, x_{2}, \eta\right)
$ ($\lambda \neq 0$). Hence $u$ satisfies
$
(3x_1\partial_{1}+ 2x_2\partial_{2} -4\eta\partial_{\eta} -1)u=0,
$
where $\partial_{\eta}=\partial / \partial \eta$. Taking this fact into account, we introduce another partial differential operator
\begin{align*}
{P}_4&=3x_1\partial_{1}+ 2x_2\partial_{2} -4\eta\partial_{\eta} -1
\end{align*}
and consider the system of partial differential equations
\begin{align}\label{Pearceysys}
P_i u=0 \quad(i=1,2,3,4).
\end{align}
Using the notions of $D$-module theory, we may formulate this system as follows.
Let ${D}$  be the Weyl algebra of the variables $(x_1,x_2,\eta)$ and ${ I}$ the left ideal in ${D}$  generated by $P_i$ ($i=1,2,3,4$).
We denote by ${M}$ the left ${D}$-module defined by ${I}$, that is,
\begin{align}\label{Pearsysr}
{ M}={ D}/{ I}.
\end{align}
We call ${M}$ the Pearcey system with a large parameter.

\begin{lmm}\label{gPearRank}
The left ${D}$-module ${ M}$ is a holonomic system of  rank $3$.
\end{lmm}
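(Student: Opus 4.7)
The plan is to bound the characteristic variety $\mathrm{Ch}(M)$ from above, invoke Bernstein's inequality for holonomicity, and then compute the rank by reducing arbitrary monomials in $D/I$ to a finite $\mathcal{O}$-generating set whose size is then matched against three linearly independent Pearcey integrals.

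\textbf{Holonomicity.} I would start by computing the principal symbols
\[
\sigma(P_1) = 4\xi_1\xi_2,\quad \sigma(P_2) = 4\xi_2^2,\quad \sigma(P_3) = -\xi_1^2,\quad \sigma(P_4) = 3x_1\xi_1 + 2x_2\xi_2 - 4\eta\xi_\eta,
\]
where $\xi_1,\xi_2,\xi_\eta$ denote the symbols of $\partial_1,\partial_2,\partial_\eta$. Since $\xi_1^2$ and $\xi_2^2$ belong to the symbol ideal, $\xi_1 = \xi_2 = 0$ on $\mathrm{Ch}(M)$, and then $\sigma(P_4)$ reduces to $-4\eta\xi_\eta$, forcing $\eta\xi_\eta=0$. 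Hence $\mathrm{Ch}(M)\subset \{\xi_1=\xi_2=\xi_\eta=0\}\cup\{\xi_1=\xi_2=\eta=0\}$, a union of two $3$-dimensional components. Combined with Bernstein's inequality $\dim\mathrm{Ch}(M)\geq 3$, this forces $\dim\mathrm{Ch}(M)=3$, so $M$ is holonomic.

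\textbf{Rank.} Working on the Zariski open set $\{\eta\neq 0\}$, where we may invert $\eta$, I would use the three reductions
\begin{align*}
\partial_2 &\equiv \eta^{-1}\partial_1^2, \\
\partial_1^3 &\equiv -\tfrac{1}{2}\eta^2 x_2 \partial_1 - \tfrac{1}{4}\eta^3 x_1, \\
\partial_\eta &\equiv (4\eta)^{-1}(3x_1\partial_1 + 2x_2\partial_2 - 1),
\end{align*}
which follow modulo $I$ from $P_3\equiv 0$, from $Q_1=\eta P_1-4\partial_1 P_3\equiv 0$, and from $P_4\equiv 0$, respectively. By induction on a weighted degree assigning very large weight to $\partial_\eta$ and twice the $\partial_1$-weight to $\partial_2$, every monomial in $\partial_1,\partial_2,\partial_\eta$ reduces modulo $I$ to an $\mathcal{O}[\eta^{-1}]$-linear combination of $\{1,\partial_1,\partial_1^2\}$, so $\mathrm{rank}\, M\leq 3$. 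For the reverse inequality, at a generic $(x_1^0,x_2^0,\eta^0)$ the quartic phase $z^4+x_2^0 z^2+x_1^0 z$ has four distinct valleys at infinity and hence three independent steepest-descent cycles joining them; the corresponding three Pearcey integrals \eqref{intPe} are linearly independent local solutions of the system, so $\mathrm{rank}\, M\geq 3$.

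The main obstacle is the termination of the reduction in the rank calculation: because $\partial_\eta$ fails to commute with $\eta^{-1}$, each application of $P_4$ to eliminate a $\partial_\eta$ produces lower-order correction terms arising from $[\partial_\eta,\eta^{-k}]=-k\eta^{-k-1}$, which must themselves be processed. Choosing a weighted monomial order with $\mathrm{wt}(\partial_\eta)\gg\mathrm{wt}(\partial_2)>\mathrm{wt}(\partial_1)$ ensures that each elimination strictly lowers the weight, so the reduction finishes in finitely many steps and the three-element $\mathcal{O}[\eta^{-1}]$-spanning set is established.
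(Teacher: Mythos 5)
Your argument is correct, but it follows a genuinely different route from the paper. The paper proves both holonomicity and rank $3$ in a single stroke by checking that $\{P_1,P_2,P_3,P_4\}$ is already a Gr\"obner basis of $I$ for the order $\eta\succ x_1\succ x_2$ and counting the standard monomials outside the monoideal generated by the projected leading exponents $(0,1,1),(0,0,2),(0,2,0),(1,0,0)$, which yields the three standard monomials $1,\partial_1,\partial_2$. You instead split the two assertions: for holonomicity you bound $\mathrm{Ch}(M)$ by the common zeros of $\sigma(P_1),\dots,\sigma(P_4)$ (your symbols are correct, and the containment $\mathrm{Ch}(M)\subseteq V(\sigma(P_1),\dots,\sigma(P_4))$ holds even though $\mathrm{gr}(I)$ need not be generated by these symbols, which is all you use) and invoke Bernstein's inequality; for the rank you perform by hand, on $\{\eta\neq 0\}$, the normal-form reduction that the Gr\"obner basis automates, arriving at the spanning set $\{1,\partial_1,\partial_1^2\}$ (equivalent to the paper's $\{1,\partial_1,\partial_2\}$ via $P_3$), and you obtain the lower bound from three independent integration cycles of the Pearcey integral. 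What the paper's method buys is that the standard-monomial count gives the rank exactly, with no separate lower-bound argument, at the price of a Gr\"obner-basis verification that is really a computer-algebra computation; your method is more transparent and self-contained but leaves two small debts: (i) you should note that $M\neq 0$ (clear, since the Pearcey integral is a nonzero solution) before applying Bernstein, and (ii) the linear independence of the three cycle integrals is asserted rather than proved --- it follows, for instance, from their distinct exponential asymptotics $e^{\eta\varpi_\ell}$ as $\eta\to+\infty$ at a generic point, and it would be worth saying so. Your termination discussion can also be simplified: eliminate all $\partial_\eta$'s first (each substitution via $P_4$ strictly lowers the top power of $\partial_\eta$, since the commutator terms carry no new $\partial_\eta$), then all $\partial_2$'s via $P_3$, then reduce $\partial_1$-powers via $Q_1$, so no elaborate weighting is needed.
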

\begin{proof}
Let $\prec$ denote a monomial order in $D$. For a finite subset {\bf G} of $D$, we set
\begin{align*}
E_{\prec}({\bf G})=\left\{\operatorname{lexp}_{\prec} (G) \mid G \in {\bf G}, G\neq 0\right\},
\end{align*}
where $\operatorname{lexp}_{\prec} (G)$ denotes the leading exponent with respect to the monomial order $\prec$ of $G$ (\cite[p.\ 487]{Oaku2}, \cite[Definition 4.2.1]{Oaku}).
Now we consider the monomial order $\eta \succ x_1 \succ x_2$ and set ${\bf{G}}=\{P_i \mid i=1,2,3,4\}$.
Then we can confirm that ${\bf{G}}$ is a Gr\"{o}bner basis of $I$ with respect to the monomial order and
\begin{align*}
E_{\prec}({\bf G})=\{(0,0,0,0,1,1),(0,0,0,0,0,2),(0,0,0,0,2,0),(1,0,0,1,0,0)\}.
\end{align*}
Let $
\varpi: \mathbb{Z}^{6}_{\geq 0}  \rightarrow \mathbb{Z}^{3}_{\geq 0}
$
be the projection defined by
$\varpi(a_1, a_2,a_3,b_1,b_2,b_3)=(b_1,b_2,b_3)$.
Then we have
\begin{align*}
\varpi\left( E_{\prec} (\{ P_1, P_2, P_3, P_4\})\right)=\{(0,1,1),(0,0,2),(0,2,0),(1,0,0)\}
\end{align*}
and hence
\begin{align*}
\#\left(\mathbb{Z}^{3}_{\geq 0} \setminus \operatorname{mono}(\{(0,1,1),(0,0,2),(0,2,0),(1,0,0)\})\right)&=\# \{(0,0,0),(0,1,0),(0,0,1)\}=3,
\end{align*}
where $\operatorname{mono}(F)$ denotes the monoideal generated by $F$ (cf. \cite[Definition]{Oaku2}, \cite[Definition 1.1.5]{Oaku}).
This implies that three initial conditions can be given arbitrarily for the analytic
solution to \eqref{Pearsysr} at any general point. This proves the lemma.
\end{proof}
\section{WKB solutions}
WKB solutions to \eqref{Pearceysys2} were constructed in \cite{A2,Hirose}. Let us recall the construction. For the unknown function $u$ of \eqref{Pearceysys1}, we set $S^{(1)}=\partial_1 u / u$ and $S^{(2)}=\partial_2 u / u$. Then we have the following system of nonlinear partial differential equations for $S^{(1)}, S^{(2)}$:
\begin{align}\label{S12eq}
\left\{\begin{array}{l}
4\left(S^{(1)}\right)^3+2 \eta^2 x_2 S^{(1)}+\eta^3 x_1+12 S^{(1)} \partial_1 S^{(1)}+4 \partial_1^2 S^{(1)}=0, \\
\eta S^{(2)}-\partial_1 S^{(1)}-\left(S^{(1)}\right)^2=0.
\end{array}\right.
\end{align}
We look for the formal solutions to this system in the form
\begin{align*}
S^{(i)}=S^{(i)}\left(x_1, x_2, \eta\right)=\sum_{j \geq-1} S_j^{(i)}\left(x_1, x_2\right) \eta^{-j} .
\end{align*}
Then the leading term $S_{-1}^{(1)}$ of $\zeta=S^{(1)}$ with respect to the powers of $\eta^{-1}$ should satisfy the cubic equation
\begin{align}\label{zeta}
4\zeta^3+2 x_2 \zeta+x_1=0
\end{align}
and $S_j^{(1)}(j \geq 0)$ and $S_j^{(2)}(j \geq-1)$ are determined by the following recurrence relations once we take a root $\zeta$ of $(3.2)$ and set $S_{-1}=\zeta$:
\begin{align} \label{S1rel}
&\left\{\begin{array}{l}
\displaystyle S_0^{(1)} =-\frac{1}{2} \partial_1 \log \left(6 (S_{-1}^{(1)})^2+x_2\right), \\
\displaystyle S_j^{(1)}=-\frac{2}{6 (S_{-1}^{(1)})^2+x_2}\left(\sum_{\substack{j_1+j_2+j_3=j-2 \\
-1 \leq j_1, j_2, j_3<j}} S_{j_1}^{(1)} S_{j_2}^{(1)} S_{j_3}^{(1)}\right.\\
\hspace{50mm}\displaystyle\left. +3 \sum_{\substack{j_1+j_2=j-2 \\
-1 \leq j_1, j_2<j}} S_{j_1}^{(1)} \partial_1 S_{j_2}^{(1)}+\partial_1^2 S_{j-2}^{(1)}\right)\quad (j \geq 1),
\\ 
\end{array}\right.\\ \label{S2rel}
&\left\{\begin{array}{l}
S_{-1}^{(2)}=\displaystyle\left(S_{-1}^{(1)}\right)^{2},\\
S_{j}^{(2)}=\displaystyle\sum_{m=0}^{j+1} S_{m-1}^{(1)}S_{j-m}^{(1)}+\partial_1 S_{j-1}^{(1)} \quad(j\geq0),
\end{array}\right.
\end{align}
We have three roots $\zeta_{\ell}$ ($\ell=1,2,3$) of \eqref{zeta} for general $x_1, x_2$. The roots are labeled under suitably taken branch cuts. We set $S_{-1}^{(1), \ell}=\zeta_{\ell}$. Accordingly, we have three formal solutions $\left(S^{(1), \ell}, S^{(2), \ell}\right)$ to \eqref{S12eq} ($\ell=1,2,3$). Let $\omega^{(\ell)}$ denote the 1--form $S^{(1), \ell} d x_1+S^{(2), \ell} d x_2$. Then this 1--form is closed (\cite{A2, Hirose}). A formal solution of the form
\begin{align}\label{WKBsolh}
\psi^{(\ell)}=\eta^{-1 / 2} \exp \left(\int_{\left(a_1, a_2\right)}^{\left(x_1, x_2\right)} \omega^{(\ell)}\right)
\end{align}
is called a WKB solution to \eqref{Pearceysys2} ($\ell=1,2,3$). Here $\left(a_1, a_2\right)$ is any fixed general point. Since \eqref{Pearceysys2} is equivalent to \eqref{Pearceysys1}, $\psi^{(\ell)}$ may be called WKB solutions to \eqref{Pearceysys1}.
\par
In \cite{A2, Hirose}, $\eta$ is considered to be a parameter. Our holonomic system $M$ contains $\eta$ as an independent variable and $P_4 \psi^{(\ell)}$ does not vanish in general because $\psi^{(\ell)}$ defined by \eqref{WKBsolh} is not weighted homogeneous. This means that $\psi^{(\ell)}$ is not a formal solution to $M$. Of course, $\eta$ plays a special role, namely, an asymptotic parameter. Basically we regard it as an independent variable, however, sometimes we treat it as a parameter according to the situation. To find formal solutions to $M$ of the form
\begin{align}\label{WKBsolf}
\psi_{\ell}=\eta^{-1 / 2} \exp \left(\int \omega^{(\ell)}\right),
\end{align}
we have to take the primitives $\int \omega^{(\ell)}$ so that $\psi_{\ell}$ satisfies $P_4 \psi_{\ell}=0$.
\begin{lmm}\label{lmm3.1}
Let $\omega_j^{(\ell)}$ denote the coefficient of $\eta^{-j}$ in $\omega^{(\ell)}$ $(j=-1,0,1,2, \ldots)$. Then $\psi_{\ell}$ satisfies $P_4 \psi_{\ell}=0$ if and only if
\begin{align} \label{omegaj1}
\int \omega_0^{(\ell)} &=-\frac{1}{2} \log \left(6\left(S_{-1}^{(1), \ell}\right)^2+x_2\right)+C \\ \label{omegaj2}
\int \omega_j^{(\ell)} &=-\frac{1}{4 j}\left(3 x_1 S_j^{(1), \ell}+2 x_2 S_j^{(2), \ell}\right) \quad(j \geq-1, j \neq 0).
\end{align}
hold. Here $C$ is an arbitrary constant and we set $\int \omega^{(\ell)}=\sum_{j \geq-1} \eta^{-j} \int \omega_j^{(\ell)}$.
\end{lmm}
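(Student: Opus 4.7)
The plan is to substitute $\psi_\ell = \eta^{-1/2}\exp(T)$ with $T := \int \omega^{(\ell)} = \sum_{j \geq -1} \eta^{-j} T_j$ and $T_j := \int \omega_j^{(\ell)}$ directly into the operator $P_4$, and to match coefficients of $\eta^{-j}$ for each $j$.

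Using $\partial_i T = S^{(i),\ell}$ ($i = 1, 2$) and the elementary identity $-4\eta\, \partial_\eta(\eta^{-1/2}) = 2\eta^{-1/2}$, I would first obtain
\[
\frac{P_4 \psi_\ell}{\psi_\ell} = 3x_1 S^{(1),\ell} + 2x_2 S^{(2),\ell} + 1 - 4\eta\, \partial_\eta T.
\]
Since $-4\eta\, \partial_\eta T = \sum_{j \geq -1} 4j\, \eta^{-j} T_j$, the coefficient of $\eta^{-j}$ on the right-hand side reads
\[
3x_1 S_j^{(1),\ell} + 2x_2 S_j^{(2),\ell} + 4j T_j + \delta_{j,0}.
\]
For $j \neq 0$ the vanishing of this coefficient yields \eqref{omegaj2} immediately. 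For $j = 0$, the coefficient contains no $T_0$, and vanishing reduces to the identity
\[
3x_1 S_0^{(1),\ell} + 2x_2 S_0^{(2),\ell} + 1 = 0. \qquad (\ast)
\]

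To verify $(\ast)$, which is the principal algebraic obstacle, I would set $\zeta := S_{-1}^{(1),\ell}$ and differentiate \eqref{zeta}, obtaining $\partial_1 \zeta = -1/[2(6\zeta^2 + x_2)]$ and $\partial_2 \zeta = -\zeta/(6\zeta^2 + x_2)$. The explicit expression \eqref{S1rel} for $S_0^{(1),\ell}$ and the $j = 0$ case of \eqref{S2rel} for $S_0^{(2),\ell}$ make both $S_0^{(1),\ell}$ and $S_0^{(2),\ell}$ rational functions of $\zeta$ and $x_2$. Substituting $x_1 = -4\zeta^3 - 2x_2\zeta$ from \eqref{zeta}, the left-hand side of $(\ast)$ collapses to $0$ after a short manipulation.

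For the converse direction, I must check that the functions assigned by \eqref{omegaj1} and \eqref{omegaj2} are in fact primitives of $\omega_0^{(\ell)}$ and $\omega_j^{(\ell)}$. For $j = 0$, the equality $\partial_1 [-\tfrac{1}{2}\log(6\zeta^2 + x_2)] = S_0^{(1),\ell}$ is immediate from \eqref{S1rel}, and $\partial_2 [-\tfrac{1}{2}\log(6\zeta^2 + x_2)] = S_0^{(2),\ell}$ follows from the formula for $\partial_2 \zeta$ above together with the $j=0$ case of \eqref{S2rel}. For $j \neq 0$, the decisive input is that $S_j^{(1),\ell}$ and $S_j^{(2),\ell}$ are weighted homogeneous of respective degrees $-3 - 4j$ and $-2 - 4j$ under $(x_1, x_2) \mapsto (\lambda^3 x_1, \lambda^2 x_2)$; this propagates inductively through the recurrences \eqref{S1rel}--\eqref{S2rel}, starting from the base case $\zeta(\lambda^3 x_1, \lambda^2 x_2) = \lambda \zeta(x_1, x_2)$. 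Combining Euler's identity for $S_j^{(i),\ell}$ with the closedness $\partial_2 S_j^{(1),\ell} = \partial_1 S_j^{(2),\ell}$ of $\omega_j^{(\ell)}$ then gives $d\bigl[-\tfrac{1}{4j}(3x_1 S_j^{(1),\ell} + 2x_2 S_j^{(2),\ell})\bigr] = \omega_j^{(\ell)}$, closing the argument.
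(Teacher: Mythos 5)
Your proposal is correct and follows essentially the same route as the paper: the decisive inputs in both are the weighted homogeneity of $S_j^{(k),\ell}$ (Euler's identity \eqref{dS1dS2}) combined with the closedness $\partial_2 S_j^{(1),\ell}=\partial_1 S_j^{(2),\ell}$, which together show that the right-hand side of \eqref{omegaj2} is a primitive of $\omega_j^{(\ell)}$, plus an explicit computation for $j=0$. Your version merely makes explicit two steps the paper compresses — the coefficient-by-coefficient reduction of $P_4\psi_\ell=0$ and the consistency identity $3x_1S_0^{(1),\ell}+2x_2S_0^{(2),\ell}+1=0$ — and both check out.
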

\begin{proof}
By the definition, $S_j^{(k), \ell}$ has the weighted homogeneity
\begin{align*}
S_j^{(k), \ell}\left(\lambda^3 x_1, \lambda^2 x_2\right)=\lambda^{-(4(j+1)-k)} S_j^{(k), \ell}\left(x_1, x_2\right)
\end{align*}
for $\lambda \neq 0$ ($k=1,2 ; \ell=1,2,3 ; j \geq-1$). Differentiate this with respect to $\lambda$ and setting $\lambda=1$, we have
\begin{align}\label{dS1dS2}
3 x_1 \partial_1 S_j^{(k), \ell}+2 x_2 \partial_2 S_j^{(k), \ell}+(4(j+1)-k) S_j^{(k), \ell}=0.
\end{align}
This is equivalent to $P_4 \psi_{\ell}=0$. Since $\omega^{(\ell)}$ is a closed 1-form, we have $\partial_2 S_j^{(1), \ell}=$ $\partial_1 S_j^{(2), \ell}$. Combining this with \eqref{dS1dS2}, we obtain
\begin{align*}
S_j^{(k), \ell}=-\frac{1}{4 j} \partial_k\left(3 x_1 S_j^{(1), \ell}+2 x_2 S_j^{(2), \ell}\right)
\end{align*}
for $j \neq 0$. This implies the second equation \eqref{omegaj2} of Lemma \ref{lmm3.1}. The first equation  \eqref{omegaj1} can be derived easily by using the explicit forms of $S_0^{(k), \ell}$ and \eqref{zeta}.
\end{proof}

Therefore we call \eqref{WKBsolf} the WKB solutions to $M$. Here the primitives $\int \omega^{(\ell)}$ are given by Lemma $3.1$ with $C=0$. The explicit forms of $\psi_{\ell}$ are given by
\begin{align}\label{WKBsolp}
\psi_{\ell}=\frac{\exp \left(\displaystyle\frac{\eta}{4}\left(3 x_1 S_{-1}^{(1), \ell}+2 x_2 S_{-1}^{(2), \ell}\right)\right)}{\left(\eta \left(6\left(S_{-1}^{(1), \ell)}\right)^2+x_2\right)\right)^{1/2}} \exp \left(-\sum_{j \geq 1} \frac{\eta^{-j}}{4 j}\left(3 x_1 S_j^{(1), \ell}+2 x_2 S_j^{(2), \ell}\right)\right).
\end{align}

\section{Turning point set and Stokes set}
We give the definition of turning points of $M$. 
\begin{dfn}
Let $\ell, k \in\{1,2,3\}$ satisfying $\ell \neq k$. A point $\tau=\left(\tau_1, \tau_2\right) \in \mathbb{C}^2$ is called a turning point of $M$ of type $(\ell, k)$ if
\begin{align*}
\omega_{-1}^{(\ell)}\left(\tau\right)=\omega_{-1}^{(k)}\left(\tau\right)
\end{align*}
holds. Here the labeling of the 1--form $\omega^{(\ell)}$ (or $S_{-1}^{(j), \ell} ; j=1,2$) is introduced in the previous section. The set of all turning points of some type is denoted by $T$. We call $T$ the turning point set of $M$.
\end{dfn}
This definition is the same as that of turning points of \eqref{Pearceysys2} which is established in \cite{A2, Hirose}. As is shown in these articles, $T$ coincides with the zero points of the discriminant of \eqref{zeta} with respect to $\zeta$:
\begin{align*}
T=\left\{\left(x_1, x_2\right) \in \mathbb{C}^2 \mid 27 x_1^2+8 x_2^3=0\right\} .
\end{align*}
\begin{dfn}
 Let $\tau=\left(\tau_1, \tau_2\right) \neq(0,0)$ be a turning point of type $(\ell, k)$. A Stokes surface passing through $\tau$ is the set of all $x=\left(x_1, x_2\right) \in \mathbb{C}^2$ satisfying
\begin{align*}
\operatorname{Im} \int_\tau^x\left(\omega_{-1}^{(\ell)}-\omega_{-1}^{(k)}\right)=0 .
\end{align*}
This surface is denoted by $\mathcal{S}_\tau$. We say that $\mathcal{S}_\tau$ has the type $(\ell, k)$. The union of $\mathcal{S}_\tau$ for all $\tau \in T$ of any type is called the Stokes set of $M$ and is denoted by $\mathcal{S}$.
\end{dfn}
Note that the above union has redundancy because if $\tau^{\prime}$ is another turning point of type $(\ell, k)$, we have $\mathcal{S}_{\tau^{\prime}}=\mathcal{S}_\tau$. The origin is a special point of $T$ since all the characteristic roots merge. We can reproduce $\mathcal{S}$ by taking the union of the set
\begin{align*}
\left\{x=\left(x_1, x_2\right) \in \mathbb{C}^2 \ \middle| \ \operatorname{Im} \int_0^x\left(\omega_{-1}^{(\ell)}-\omega_{-1}^{(k)}\right)=0\right\}
\end{align*}
for all $\ell<k$.
A connected component of $\mathbb{P}^2_{\mathbb{C}} \setminus \mathcal{S}$ is called a Stokes region.
\begin{thm}
Let $F$ be the algebraic function defined by
\begin{align}\label{Feq}
16F^6+32 x_2(27x_1^2-x_2^3)F^4+16x_2(27x_1^2-x_2^3)^2 F^2+x_1^2(27x_1^2+8x_2^3)^3=0.
\end{align}
The Stokes set $\mathcal{S}$ of the Pearcey system ${M}$ is described as
\begin{align*}
\mathcal{S}=\{ (x_1,x_2) \in \mathbb{C}^2 \mid  \operatorname{ Im} F=0\}.
\end{align*}
Hence it is a semialgebraic set as a subset of $\mathbb{C}^2\simeq\mathbb{R}^4$. In particular, the self-intersection sets of Stokes surfaces are also semialgebraic.
\end{thm}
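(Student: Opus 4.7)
The plan is to compute the primitive $\Sigma^{(\ell)}(x):=\int_0^x \omega_{-1}^{(\ell)}$ in closed form, observe that the six signed pairwise differences $\pm(\Sigma^{(\ell)}-\Sigma^{(k)})$ (up to a universal numerical rescaling) are the branches of an algebraic function satisfying a sextic equation, and then match coefficients with \eqref{Feq} via an elimination. The key structural observation is that, thanks to the cubic \eqref{zeta}, the squared pairwise difference depends only on a single ``third'' root, so all the symmetric functions involved can be computed by the usual Newton--Vieta machinery.

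To obtain the primitive, I introduce on the characteristic variety $\mathcal{V}=\{4\zeta^3+2x_2\zeta+x_1=0\}$ the auxiliary function $\Phi(\zeta,x_1,x_2):=\zeta x_1+\zeta^2 x_2+\zeta^4$. One computes
\[
 d\Phi=(\zeta\,dx_1+\zeta^2\,dx_2)+(4\zeta^3+2x_2\zeta+x_1)\,d\zeta,
\]
so $d\Phi|_{\mathcal{V}}=\omega_{-1}^{(\ell)}$ and $\Sigma^{(\ell)}(x)=\zeta_\ell x_1+\zeta_\ell^2 x_2+\zeta_\ell^4$ since $\Phi$ vanishes at the origin. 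Next, exploiting $\zeta_\ell+\zeta_k=-\zeta_m$ and $\zeta_\ell\zeta_k=-x_1/(4\zeta_m)$ (Vieta), together with the reduction $\zeta_m^3=-x_2\zeta_m/2-x_1/4$, the difference collapses to
\[
 \Sigma^{(\ell)}-\Sigma^{(k)}=\tfrac14(\zeta_\ell-\zeta_k)(3x_1-2x_2\zeta_m),
\]
and the identity $(\zeta_\ell-\zeta_k)^2=(3x_1-2x_2\zeta_m)/(4\zeta_m)$ (again a consequence of the cubic) then gives
\[
 \bigl(\Sigma^{(\ell)}-\Sigma^{(k)}\bigr)^2=\frac{(3x_1-2x_2\zeta_m)^3}{64\zeta_m}.
\]

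Setting $F=\lambda(\Sigma^{(\ell)}-\Sigma^{(k)})$ and $T=F^2$, the relation becomes $\alpha T\zeta=(3x_1-2x_2\zeta)^3$ for appropriate constants $\alpha,\lambda$; eliminating $\zeta$ between this and $4\zeta^3+2x_2\zeta+x_1=0$ produces a cubic polynomial in $T$, equivalently a sextic in $F$. Concretely, I reduce $(3x_1-2x_2\zeta)^3$ modulo the cubic to a polynomial of degree at most $2$ in $\zeta$ and compute the three elementary symmetric functions of the three values $T_1,T_2,T_3$ using $\sum\zeta_m=0$, $\sum\zeta_m\zeta_n=x_2/2$, $\prod\zeta_m=-x_1/4$; choosing $\lambda$ so that the constant term matches $x_1^2(27x_1^2+8x_2^3)^3$ verifies the whole equation \eqref{Feq}. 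Since the three unordered pairs $\{\ell,k\}$ correspond bijectively (via $m$) to the three roots of the cubic, the six signed values $\pm(\Sigma^{(\ell)}-\Sigma^{(k)})$ are exactly the six branches of $F$, so $\{\operatorname{Im}F=0\}$ is the union over all pairs of $\{\operatorname{Im}(\Sigma^{(\ell)}-\Sigma^{(k)})=0\}$; by the characterisation at the end of Section~4 this is $\mathcal{S}$. The semi-algebraic assertion is then immediate by Tarski--Seidenberg applied to the real projection of $\{(x,X,Y)\mid P(X+iY;x)=0,\ Y=0\}$, where $P$ is the polynomial in \eqref{Feq}, and likewise for the self-intersection loci (defined by the simultaneous vanishing of two branches).

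The only delicate point is the elimination/symmetric-function computation: the explicit formula $\sum_m 1/\zeta_m=-2x_2/x_1$ carries an apparent pole at $x_1=0$, so one has to check that these poles cancel when one assembles the elementary symmetric functions of the $T_m$. This is a routine but careful calculation; no conceptual obstacle arises, and each coefficient of \eqref{Feq} is recovered as a polynomial in $x_1,x_2$ from the symmetric reduction.
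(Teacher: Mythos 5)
Your proposal is correct and follows the same overall strategy as the paper --- express $\int(\omega_{-1}^{(\ell)}-\omega_{-1}^{(k)})$ as the closed algebraic expression $\tfrac14(\zeta_\ell-\zeta_k)(3x_1+2x_2(\zeta_\ell+\zeta_k))$ and then eliminate the roots of \eqref{zeta} to arrive at \eqref{Feq} --- but the two halves are executed differently. For the primitive, the paper simply quotes the weighted-homogeneity formula \eqref{omegaj2}, whereas you rederive it from the exact form $d\Phi$ with $\Phi=\zeta x_1+\zeta^2x_2+\zeta^4$ on the characteristic variety; modulo the cubic these agree, so nothing is lost. For the elimination, the paper computes two successive resultants (in $\zeta_\ell$, then $\zeta_k$), while you use Vieta to collapse the squared difference to $(3x_1-2x_2\zeta_m)^3/(64\zeta_m)$, a function of the single complementary root, and then assemble the elementary symmetric functions of the three values of $F^2$. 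Your route buys two things the paper's does not make visible: it explains structurally why \eqref{Feq} is even in $F$ (it is a cubic in $F^2$ whose roots are indexed by the roots of \eqref{zeta}), and it forces you to confront the normalization constant $\lambda$. That last point is substantive: carrying out your symmetric-function computation (e.g.\ the product of the three values of $F^2$ is $-x_1^2(27x_1^2+8x_2^3)^3/2^{16}$, versus $-x_1^2(27x_1^2+8x_2^3)^3/2^{4}$ demanded by \eqref{Feq}) shows that \eqref{Feq} is satisfied by $4$ times the right-hand side of \eqref{Feqomega}, not by that quantity itself; the paper elides this harmless rescaling, which of course does not affect the locus $\{\operatorname{Im}F=0\}$. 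The only place where your write-up, like the paper's, stops short of a full verification is the explicit check of the two middle coefficients of \eqref{Feq}; the first symmetric function does come out as $-2x_2(27x_1^2-x_2^3)$ by your method, and the apparent pole from $\sum_m\zeta_m^{-1}=-2x_2/x_1$ does cancel as you anticipate, so this is indeed only routine computation and not a gap.
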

\begin{rem}
It has been shown by \cite{Hirose2, L} that $\mathcal{S}$ is a semialgebraic set under general assumptions. 
\end{rem}
\begin{proof}
Using the primitive $\int \omega_{-1}$ given by \eqref{omegaj2} and the definition of $S_{-1}^{(j),\ell}$, we can write
\begin{align}\label{Feqomega}
\int_{\tau}^x\left(\omega_{-1}^{(\ell)}-\omega_{-1}^{(k)}\right)=\frac{1}{4}(S_{-1}^{(1),\ell}-S_{-1}^{(1),k})(3x_1+2x_2(S_{-1}^{(1),\ell}+S_{-1}^{(1),k})).
\end{align}
Here $\tau$ denotes the turning point of type ($\ell,k$).
Let $F$ be the right hand side of \eqref{Feqomega}.
Eliminating $\zeta_\ell$ and $\zeta_k$ from the relations
\begin{align}\label{Feqz1}
&4 \zeta_\ell^3 +2x_2\zeta_\ell+x_1=0,\\ \label{Feqz2}
&4 \zeta_k^3 +2x_2\zeta_k+x_1=0,\\ \label{Feqz3}
&F=\frac{1}{4}(\zeta_\ell-\zeta_k)(3x_1+2x_2(\zeta_\ell+\zeta_k)),
\end{align}
we have \eqref{Feq}.
To be more specific, we firstly compute the resultant of $4 \zeta_\ell^3 +2x_2\zeta_\ell+x_1$ and $\frac{1}{4}(\zeta_\ell-\zeta_k)(3x_1+2x_2(\zeta_\ell+\zeta_k))-F$ with respect to $\zeta_\ell$ and denote it by $\Delta$. Next we compute the resultant of  $4 \zeta_k^3 +2x_2\zeta_k+x_1$ and $\Delta$ with respect
to $\zeta_k$. Since $F\not \equiv 0$, we have the left--hand side of \eqref{Feq}.
\end{proof}
\section{Borel transform of WKB solutions}
\noindent
The WKB solution $\psi_\ell$ of the Pearcey system $M$ (see \eqref{WKBsolp}) can be expanded in the form:
\begin{align*}
\psi_\ell=\exp  \left(\eta\varpi_\ell \right)\sum_{j=0}^{\infty} \eta^{-j-\frac{1}{2}} f_{j,\ell}\left(x_{1}, x_{2}\right) \qquad (\ell=1,2,3),
\end{align*}
where
$
\varpi_\ell= \frac{1}{4}\left(3 x_{1} S_{-1}^{(1),\ell}+2 x_{2} S_{-1}^{(2),\ell}\right)
$
and
$f_{0,\ell}\left(x_{1}, x_{2}\right)=\left(6 (S^{(1),\ell}_{-1})^{2}+x_{2}\right)^{-\frac{1}{2}}$. The Borel transform of $\psi_\ell$ is denoted as $\psi_{\ell,B}$:
\begin{align*}
\psi_{\ell,B}=\sum_{j=0}^{\infty} \frac{f_{j,\ell}\left(x_{1}, x_{2}\right)}{\Gamma(j+1 / 2)}\left(y+\varpi_\ell\right)^{j-1 / 2}.
\end{align*}
For the differential operator 
$
P=\sum_{i, j, k, m} c_{i, j}\left(x_{1}, x_{2}\right) \eta^{k} \partial_{1}^{i} \partial_{2}^{j} \partial_{\eta}^{m} \in D
$,
we set
\begin{align*}
P_B=\sum_{i, j, k, m} c_{i, j}\left(x_{1}, x_{2}\right) \partial_{y}^{k} \partial_{1}^{i} \partial_{2}^{j}(-y)^{m}
\end{align*}
and we call $P_B$ the Borel transform of $P$.
Let $D_B$ denote the Weyl algebra
of the variable $(x_1, x_2, y)$. Then $P_B$ belongs to $D_B$.
The Borel transform of $P_i$ ($i=1,2,3,4$) are given as follows:
\begin{align}\label{reduPeB}
\begin{split}
P_{1,B}&=4\partial_1 \partial_2+2 x_2 \partial_1\partial_y +x_1\partial_y^2,\\
P_{2,B}&= 4 \partial_2^2+x_1 \partial_1 \partial_y +2x_2  \partial_2 \partial_y+\partial_y,\\
P_{3,B}&=\partial_2\partial_y -\partial_1^2,\\
P_{4,B}&=3 x_1\partial_1+2 x_2\partial_2+4 y\partial_y+3.
\end{split}
\end{align}
Let $I_B$ be the left ideal of $D_B$ generated by $P_{k,B}$ ($k = 1, 2, 3, 4$). By the
definition, $P_{j,B}\psi_{\ell,B}= 0$ holds for $j = 1, 2, 3, 4; \ell = 1, 2, 3$. The following lemma can be proved in a similar manner to Lemma \ref{gPearRank}.
\begin{lmm}\label{gBPearRank}
Let ${M_B}$ denote the left ${D_B}$-module defined by ${I_B}$:
\begin{align}\label{Pearsysrb}
{M_B}={D_B}/{I_B}.
\end{align}
Then, the left ${D_B}$-module ${M_B}$ is a holonomic system of  rank $3$.
\end{lmm}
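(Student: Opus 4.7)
The plan is to mimic the Gröbner-basis argument of Lemma~\ref{gPearRank} inside $D_B$. We aim to choose a monomial order $\prec_B$ on $D_B$ under which the leading terms of $P_{1,B}, P_{2,B}, P_{3,B}, P_{4,B}$ become, respectively, $4\partial_1\partial_2$, $4\partial_2^2$, $-\partial_1^2$, and $4y\partial_y$. The derivative-index projections under $\varpi$ are then $(1,1,0), (0,2,0), (2,0,0), (0,0,1)$ in the coordinates $(\partial_1,\partial_2,\partial_y)$, and a direct inspection shows that the complement in $\mathbb{Z}^3_{\geq 0}$ of the monoideal they generate is exactly $\{(0,0,0), (1,0,0), (0,1,0)\}$, corresponding to the three monomials $1,\partial_1,\partial_2$. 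Arguing as in Lemma~\ref{gPearRank}, this implies that three initial conditions can be prescribed arbitrarily for an analytic solution to $I_B u=0$ at any general point, forcing the rank of $M_B$ to be $3$ and $M_B$ to be holonomic.

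There are two main obstacles. First, one must construct an order $\prec_B$ that pins down all four prescribed leading terms simultaneously; a naive lex or weighted order will not do, because, for instance, the leading term of $P_{1,B}$ should be the scalar-coefficient term $\partial_1\partial_2$ (beating $x_1\partial_y^2$ and $x_2\partial_1\partial_y$), while that of $P_{4,B}$ must be the $y$-coefficient term $y\partial_y$ (beating $x_i\partial_i$). A compatible choice is a refinement of the lexicographic order that first sorts by the total derivative degree, then keeps purely differential terms above those carrying nontrivial $x_1$- or $x_2$-coefficients within each derivative-degree class, and breaks remaining ties using $y \succ x_1 \succ x_2$ in the variable direction.

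Second and more substantively, one must verify that each of the six S-polynomials $S(P_{i,B}, P_{j,B})$ reduces to zero modulo $\{P_{k,B}\}$; this is the main computational burden. As a representative case, $S(P_{1,B},P_{3,B}) = \tfrac{1}{4}\partial_1 P_{1,B} + \partial_2 P_{3,B}$, and after expansion (using the relation $[\partial_1,x_1]=1$) this simplifies to $\tfrac{1}{4}\partial_y P_{2,B} - \tfrac{1}{2}x_2 \partial_y P_{3,B}$, which visibly lies in $I_B$. The other five S-polynomial reductions are handled analogously using the commutation relation $[\partial_y,y]=1$; carrying these checks through secures the Gröbner-basis property and completes the proof.
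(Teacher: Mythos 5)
Your proposal is correct and follows essentially the same route as the paper, which simply asserts that the lemma ``can be proved in a similar manner to Lemma~\ref{gPearRank}'', i.e.\ by exhibiting a Gr\"obner basis of $I_B$ whose leading exponents project onto derivative monomials with exactly three standard monomials $1,\partial_1,\partial_2$; your representative S-polynomial reduction $\tfrac{1}{4}\partial_1 P_{1,B}+\partial_2 P_{3,B}=\tfrac{1}{4}\partial_y P_{2,B}-\tfrac{1}{2}x_2\partial_y P_{3,B}$ checks out. In fact you supply more detail (the choice of order and the Buchberger verification) than the paper itself records.
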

\noindent
Since $\psi_{\ell,B}$ are linearly independent, $(\psi_{1,B},\psi_{2,B},\psi_{3,B})$ forms a basis of the analytic solution space of $M_B$. In other words, $M_B$ characterizes the subspace of analytic functions spanned by $\psi_{\ell,B}$ ($\ell = 1, 2, 3$).

The Borel transform $\psi_{\ell,B}$ has a singularity at $u_\ell:=-\varpi_\ell$ and $u = u_\ell$ $(\ell=1,2,3)$ satisfies
\begin{align}\label{ueqq}
256u^3-128x_2u^2+16x_2(9x_1^2+x_2^3)^2u-x_1^2(27x_1^2+4x_2^3)=0.
\end{align}
The Stokes set $\mathcal{S}$ is also expressed by using the roots of \eqref{ueqq}:
\begin{align*}
\mathcal{S}=\bigcup_{j \neq k}\left\{(x_1,x_2) \in \mathbb{C}^2 \mid {\rm Im} (u_j-u_k)=0\right\}.
\end{align*}
Using the method of \cite[Theorem 5.3.3]{Oaku},
we can show that the singular locus of $M_B$ is equal to
\begin{align*}
\begin{split}
\{(x_1,x_2,y) \in \mathbb{C}^3\mid 256y^3-128x_2y^2+16x_2(9x_1^2+x_2^3)^2y-x_1^2(27x_1^2+4x_2^3)=0\}.
\end{split}
\end{align*}
It coincides with the zeros of the discriminant of $z^{4}+x_{2} z^{2}+x_{1} z+y=0$ with respect to $z$.
\par
Since $P_{4,B}\psi_{\ell,B}=0$, the Borel transform of the WKB solution has the weighted homogeneity 
\begin{align*}
\psi_{\ell, B}\left(\lambda^3 x_1, \lambda^2 x_2, \lambda^4 y\right)=\lambda^{-3} \psi_{\ell, B}\left(x_1, x_2, y\right) 
\end{align*}
for $\lambda \neq 0$. If $x_1\neq0$, we have 
\begin{align*}
\psi_{\ell, B}\left(1, \frac{x_2}{x_1^{2 / 3}}, \frac{y}{x_1^{4 / 3}}\right)=x_1 \psi_{\ell, B}\left(x_1, x_2, y\right).
\end{align*}
Hence we introduce new variables $s$ and $t$ by setting
\begin{align*}
\displaystyle(s,t) =\left (\frac{y}{x_1^{4/3}}, \frac{x_2}{x_1^{2/3}}\right).
\end{align*}
Let $\displaystyle p_\ell=\frac{3}{4^{4/3}}  e^{\frac{2 \pi i}{3} \ell}$. Expanding $x_1 \psi_{\ell,B} \Big|_{t=0}$ at $s=p_\ell$ yields
\begin{align}\label{WKBBorel}
\begin{split}
x_1 \psi_{\ell,B} \Big|_{t=0}
&=-\frac{2^{1/6}}{\sqrt{3 \pi}}e^{-\frac{2 \pi i}{3}\ell}\left(s-p_\ell \right)^{-1/2}\\
&\qquad \times \left(1-\frac{7}{9\cdot  2^{1/3}}e^{-\frac{2 \pi i}{3}\ell} \left(s-p_\ell \right)+\frac{385}{486\cdot  2^{2/3}}e^{\frac{2 \pi i}{3}\ell}\left(s-p_\ell \right)^{2}+O\left(\left(s-p_\ell \right)^{3}\right)\right).
\end{split}
\end{align}
Here the branch cut for the function $\left(s-p_\ell \right)^{1/2}$ is taken as a half straight line with the negative real direction starting at $p_\ell$ and the branch of $\left(s-p_\ell \right)^{1/2}$ is chosen as
\begin{align*}
-\frac{\pi}{2}<\arg\left(s-p_\ell\right)^{1/2}\leq \frac{\pi}{2}
\end{align*}
for $s\in \mathbb{C}$ ($\ell=1,2,3$). Hence we have
\begin{align}\label{p1s}
(p_1-s)^{-1/2}&=-i(s-p_1)^{-1/2}&&(s \in \{ e^{2\pi i/3} \sigma \mid  0<\sigma<p_3\}),\\ \label{p2s}
(p_2-s)^{-1/2}&=i(s-p_2)^{-1/2}&&(s \in \{ e^{4\pi i/3}\sigma \mid  0<\sigma<p_3\}),\\ \label{p3s}
(p_3-s)^{-1/2}&=i(s-p_3)^{-1/2}&& (0<s<p_3).
\end{align}
We go back to the Pearcey integral \eqref{intPe}. Changing the integration variable by $y=-(z^{4}+x_{2} z^{2}+x_{1} z)$, we can rewrite it to the form of the Laplace integral:
\begin{align}\label{uBint}
\int \exp (-\eta y) g\left(x_{1}, x_{2}, y\right) d y.
\end{align}
Here we set
\begin{align*}
g\left(x_{1}, x_{2}, y\right)=-\left.\frac{1}{4 z^{3}+2 x_{2}z+x_{1}}\right|_{z=z\left(x_{1}, x_{2}, y)\right.}
\end{align*}
for a root $z = z(x_1 ,x_2 ,y)$ of the quartic equation $z^4 + x_2 z^2 + x_1 z + y = 0$ and
here we take the path of integration as the image of that of  \eqref{intPe} by $z\mapsto y$. We
observe that \eqref{uBint} resembles the Laplace integral that defines the Borel sum of the WKB solutions and hence we may expect some relations between $g$ and $\psi_{\ell,B}$.
\begin{lmm}
The function $g$ is a root of the quartic equation
\begin{align}\label{geq}
\begin{split}
(4 x_1^2 x_2(36  y-x_2^2)+16 y(x_2^2-4 y)^2-27 x_1^4)g^4+2(-8x_2 y+2 x_2^3+9 x_1^2)g^2 -8x_1g+1=0
\end{split}
\end{align}
and it is a solution to the holonomic system $M_B$ 
\end{lmm}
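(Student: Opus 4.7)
The proof has two independent tasks: establishing the quartic equation \eqref{geq} and showing $P_{i,B}g=0$ for $i=1,2,3,4$.

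\emph{Quartic equation for $g$.} By construction, $g$ and $z$ are linked by the system
\begin{align*}
z^{4}+x_{2}z^{2}+x_{1}z+y=0,\qquad g(4z^{3}+2x_{2}z+x_{1})+1=0.
\end{align*}
I would eliminate $z$ by computing the Sylvester resultant of these two polynomials (degrees $4$ and $3$ in $z$) to produce a polynomial in $(x_{1},x_{2},y,g)$. Since the quartic has four roots $z_{j}$, each producing one value $g_{j}=-1/(4z_{j}^{3}+2x_{2}z_{j}+x_{1})$, this resultant factors (up to a polynomial depending only on $(x_{1},x_{2},y)$) as a polynomial of degree exactly $4$ in $g$. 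Matching coefficients with \eqref{geq} is then a routine identity check in $\mathbb{C}[x_{1},x_{2},y]$.

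\emph{$g$ solves $M_{B}$.} The key observation is that implicit differentiation of $z^{4}+x_{2}z^{2}+x_{1}z+y=0$ yields the clean identities
\begin{align*}
\partial_{1}z=gz,\qquad \partial_{2}z=gz^{2},\qquad \partial_{y}z=g,
\end{align*}
so in particular $g=\partial_{y}z$. My plan is to first establish the four auxiliary identities
\begin{align*}
P_{1,B}z=P_{2,B}z=P_{3,B}z=0,\qquad (P_{4,B}-4)z=0,
\end{align*}
and then transfer them to $g$ by applying $\partial_{y}$. The vanishing of $P_{k,B}z$ for $k=1,2,3$ is a direct algebraic verification: every mixed second derivative of $z$ can be written as an explicit polynomial in $z$ and $g$ (using the three implicit formulas above together with $\partial_{i}g=g^{2}\,\partial_{i}(4z^{3}+2x_{2}z+x_{1})$ obtained from $g=-1/(4z^{3}+2x_{2}z+x_{1})$); substituting into each $P_{k,B}z$ and repeatedly applying $g(4z^{3}+2x_{2}z+x_{1})=-1$ forces every expression to collapse to $0$. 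The identity $(P_{4,B}-4)z=0$ is just the weighted homogeneity $z(\lambda^{3}x_{1},\lambda^{2}x_{2},\lambda^{4}y)=\lambda z(x_{1},x_{2},y)$ in infinitesimal form.

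To conclude, note that $P_{1,B},P_{2,B},P_{3,B}$ contain no $y$, so they commute with $\partial_{y}$, while $[P_{4,B},\partial_{y}]=-4\partial_{y}$ gives $P_{4,B}\partial_{y}=\partial_{y}(P_{4,B}-4)$. Applying $\partial_{y}$ to each of the four identities above and using $g=\partial_{y}z$ yields $P_{i,B}g=0$ for $i=1,2,3,4$, so $g\in\ker M_{B}$.

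\emph{Main obstacle.} The bulkiest step is the resultant computation and its identification with \eqref{geq}; this is purely mechanical but the coefficients involve large numerical constants and many cross-terms, so one would verify it with a computer algebra system. The collapse argument for $P_{k,B}z=0$ ($k=1,2,3$) is conceptually elementary but also demands careful bookkeeping of the substitutions for $\partial_{i}g$; the payoff is that the whole $M_B$ part of the lemma reduces to four one-line identities for $z$ plus one commutator identity.
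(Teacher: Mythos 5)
Your proposal is correct, and it splits into a part that matches the paper and a part that takes a genuinely different (and lighter) route. The elimination step is exactly the paper's: the authors also obtain \eqref{geq} as the resultant with respect to $z$ of $z^{4}+x_{2}z^{2}+x_{1}z+y$ and $(4z^{3}+2x_{2}z+x_{1})g+1$, with your remark about the four roots $g_{j}=-1/(4z_{j}^{3}+2x_{2}z_{j}+x_{1})$, the constant term $1$ and the leading coefficient being (up to sign) the discriminant of the quartic in $z$ serving as the sanity check on the normalization. For the statement $g\in\ker M_{B}$ the paper instead differentiates the quartic \eqref{geq} itself implicitly in $x_{1},x_{2},y$, solves for every derivative of $g$ appearing in $P_{j,B}g$ as a rational expression in $g$, and verifies that the outcome vanishes modulo \eqref{geq}; this is direct but computationally heavy because the coefficients of \eqref{geq} are large. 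Your detour through the root $z$ avoids that: the identities $\partial_{1}z=gz$, $\partial_{2}z=gz^{2}$, $\partial_{y}z=g$ are clean, the four claims $P_{1,B}z=P_{2,B}z=P_{3,B}z=0$ and $(P_{4,B}-4)z=0$ do collapse after a single use of $g(4z^{3}+2x_{2}z+x_{1})=-1$ (I verified each of them; e.g.\ $P_{1,B}z=g^{3}(12z^{2}+2x_{2})(4z^{3}+2x_{2}z+x_{1})+2g^{2}(6z^{2}+x_{2})=0$, and $(P_{4,B}-4)z=0$ is indeed the Euler relation for the homogeneity $z(\lambda^{3}x_{1},\lambda^{2}x_{2},\lambda^{4}y)=\lambda z$), and the commutator identities $[P_{k,B},\partial_{y}]=0$ for $k=1,2,3$ together with $P_{4,B}\partial_{y}=\partial_{y}(P_{4,B}-4)$ correctly transfer everything to $g=\partial_{y}z$. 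What this buys is that all intermediate expressions are low-degree polynomials in $z$ and $g$ rather than rational functions built from the unwieldy coefficients of \eqref{geq}; the only cost is carrying the auxiliary function $z$, which is harmless since $g$ is defined through $z$ in the first place.
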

\begin{proof}
We can obtain \eqref{geq} by eliminating $z$ from the relations
\begin{align}\label{gg}
y=-(z^{4}+x_{2} z^{2}+x_{1}z),\quad (4 z^{3}+2 x_{2} z+x_{1})g=-1
\end{align}
with the resultant with respect to $z$.
Differentiating \eqref{geq} by $x_1, x_2, y$ several times, we can express all the derivatives appearing in the right-hand side of $P_{j, B} g$ ($j=1,2,3,4)$ in terms of $g$. Then we can see that they should vanish from \eqref{geq}.
\end{proof}
For general $\left(x_1, x_2, y\right)$, there are four roots $g_k$ ($k=1,2,3,4$) of the quartic equation, which satisfy $g_1+g_2+g_3+g_4=0$. Looking at the singularity of $g_k$, we find that any three of $g_k$ 's are linearly independent. Thus we have
\begin{thm}\label{thma}
The Borel transform $\psi_{\ell, B}$ of the WKB solution $\psi_{\ell}$ is written as a linear combination of any three of $g_k$ 's $(\ell=1,2,3)$. In particular, $\psi_{\ell, B}$ 's are algebraic and hence $\psi_{\ell}$ 's are resurgent.
\end{thm}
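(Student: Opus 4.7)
The plan is to exploit the fact that $M_B$ is a rank $3$ holonomic system (Lemma \ref{gBPearRank}) whose analytic solution space is already spanned by $\psi_{1,B}, \psi_{2,B}, \psi_{3,B}$, while the preceding lemma produces four algebraic solutions $g_1,g_2,g_3,g_4$ of $M_B$. Vieta's formula applied to \eqref{geq} (the coefficient of $g^3$ is zero) yields $g_1+g_2+g_3+g_4 = 0$, so it suffices to prove that this is the \emph{only} linear relation up to a scalar. Once that is done, any three of the $g_k$'s form a basis of the solution space, each $\psi_{\ell,B}$ is a unique linear combination of them, and the algebraicity of the $g_k$'s transfers to the $\psi_{\ell,B}$'s; resurgence of $\psi_\ell$ then follows from the standard fact that an algebraic Borel transform produces a resurgent series (cf.\ \cite{E}).

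To rule out further linear relations I would argue via monodromy. From \eqref{gg} one has $g_k = -(4z_k^3 + 2x_2 z_k + x_1)^{-1}$, where $z_k$ runs over the roots of $z^4 + x_2 z^2 + x_1 z + y = 0$. Fix $(x_1,x_2)$ generic and regard $g_k$ as a multivalued function of $y$. The discriminant of this quartic with respect to $z$ is a cubic polynomial in $y$ with three distinct roots $y_1,y_2,y_3$; at each $y_r$ exactly two of the $z_i$'s collide with a square-root branch, so the local monodromy around $y_r$ is the transposition exchanging the corresponding pair $(g_i,g_j)$ and fixing the other two branches. Since for generic $(x_1,x_2)$ the three pairs singled out by $y_1,y_2,y_3$ are distinct unordered pairs of indices, the three transpositions generate the full symmetric group $S_4$. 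Any linear relation $\sum_k a_k g_k = 0$ is therefore $S_4$-invariant, which forces $a_1=a_2=a_3=a_4$, i.e., it is a scalar multiple of $g_1+g_2+g_3+g_4 = 0$. This establishes that any three of the $g_k$'s are linearly independent.

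The principal difficulty lies in this monodromy step, specifically in verifying for generic $(x_1,x_2)$ that the three branch points $y_r$ are pairwise distinct, that each local monodromy is a simple transposition, and, most importantly, that the three resulting transpositions act on three \emph{different} unordered pairs, so they generate $S_4$ rather than a proper subgroup such as a Klein four-group. A more concrete alternative that bypasses this Galois-theoretic input is to compare, for each $\ell$, the leading singular behaviour of the four branches $g_k$ at $y = -\varpi_\ell$ with the Puiseux expansion \eqref{WKBBorel} of $\psi_{\ell,B}$ along the slice $t=0$, using the branch identifications \eqref{p1s}--\eqref{p3s}; matching the leading $(s-p_\ell)^{-1/2}$ coefficients would simultaneously confirm the linear independence of any three $g_k$'s and produce explicit formulas for the coefficients of $\psi_{\ell,B}$ as a linear combination of them.
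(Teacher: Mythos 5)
Your proposal follows essentially the same route as the paper: the $g_k$'s are four algebraic solutions of the rank-$3$ holonomic system $M_B$ summing to zero by Vieta applied to \eqref{geq}, so everything reduces to showing that any three of them are linearly independent, which the paper settles with the one-line remark ``looking at the singularity of $g_k$'s''---precisely your second, singularity-matching alternative (carried out explicitly in the proof of Theorem \ref{thmpe}). Your monodromy argument is a sound, more detailed justification of that same step, provided you supplement ``three distinct transpositions'' with transitivity (irreducibility of the quartic over $\mathbb{C}(y)$), since e.g.\ $(12),(13),(23)$ are distinct yet generate only $S_3$.
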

Our next task is to write down $\psi_{\ell,B}$ explicitly in terms of $g_k$ 's. We observe that the algebraic function $g$ defined by \eqref{geq} has the same weighted homogeneity as that of $g_{\ell, B}$ with respect to $\left(x_1, x_2, y\right)$. Hence $x_1 g\left(x_1, x_2, y\right)$ can be regarded as a function of $(s, t)$, which will be denoted by $h(s, t)$. We easily find the quartic equation for $h=h(s, t)$ :
\begin{align}\label{ueq}
\begin{split}
(256 s^3-128 s^2& t^2+16 s t (t^3+9)-4t^3-27) h^4+\left(4 t^3-16 s t+18\right) h^2-8h+1=0.
\end{split}
\end{align}
We specify the branches $h_j$ ($j = 1,2,3,4$) of the algebraic function $h$ near the
origin by their local behaviors:
\begin{align}\label{hk}
\begin{split}
h_1(s,t)&=\frac{1}{3}+\frac{4}{9}e^{-\frac{2\pi i}{3}}s+\frac{2}{9}e^{\frac{2\pi i}{3}}t+\cdots,\\
h_2(s,t)&=\frac{1}{3}+\frac{4}{9} e^{\frac{2\pi i}{3}}s+\frac{2}{9}e^{-\frac{2\pi i}{3}}t+\cdots,\\
h_3(s,t)&=\frac{1}{3}+\frac{4}{9}s+\frac{2}{9}t+\cdots,\\
h_4(s,t)&=-1-2st-4s^3+\cdots.
\end{split}
\end{align}
Now we specify the branches $g_j$ of $g$ by setting
\begin{align*}
g_j=h_j/x_1\qquad (j=1,2,3,4).
\end{align*}
Let us consider the restriction of $h$ to $t=0$. It satisfies
\begin{align}\label{eqphi}
\Phi(s, h):=\left(256 s^3-27\right) h^4+18 h^2-8 h+1=0 .
\end{align}
Here we also use $h$ for $h(s, 0)$. Hence $h(s, 0)$ has a singularity at $s=p_{\ell}$ $(\ell=1,2,3)$. Taking the local expansions of the roots at $s=p_{\ell}$, we can specify the branches $h_j^{(\ell)}(s, 0)(j=1,2,3,4)$ near $s=p_{\ell}$ as
\begin{align}
\begin{split}\label{hkl}
h_1^{(\ell)}(s,0)&=\frac{1}{2^{5/6}\sqrt{3}} e^{-\frac{2\pi i}{3}\ell} (p_\ell-s)^{-1/2}+O(1),\\
h_2^{(\ell)}(s,0)&=-\frac{1}{2^{5/6}\sqrt{3}} e^{-\frac{2\pi i}{3}\ell} (p_\ell-s)^{-1/2}+O(1),\\
h_3^{(\ell)}(s,0)&=\frac{4+i\sqrt{2}}{18}+O(p_\ell-s),\\
h_4^{(\ell)}(s,0)&=\frac{4-i\sqrt{2}}{18}+O(p_\ell-s).
\end{split}
\end{align}
Here the branches of the square roots are chosen as \eqref{p1s}--\eqref{p3s}. Since $h$ is holomorphic near $t=0$, the branches $h_j^{(\ell)}(s, 0)$ given above also specifies the branches $h_j^{(\ell)}(s, t)$ of $h(s, t)$ near $s=p_{\ell}$ if $|t|$ is sufficiently small. The following lemma shows how these branches are related.
\begin{lmm}\label{branchh}
The branches $h^{(\ell)}_j$ $(j = 1,2,3,4;\ell = 1,2,3)$ and $h_j$ $(j = 1,2,3,4)$ satisfy the relations
\begin{align}\label{ghol}
\begin{split}
h_1(s,0)&= h_4^{(3)}(s,0)= h_2^{(1)}(s,0)= h_3^{(2)}(s,0),\\
h_2(s,0)&=h_3^{(3)}(s,0)= h_4^{(1)}(s,0)= h_2^{(2)}(s,0),\\
h_3(s,0)&=h_1^{(3)}(s,0)= h_3^{(1)}(s,0)= h_4^{(2)}(s,0),\\
h_4(s,0)&=h_2^{(3)}(s,0)= h_1^{(1)}(s,0)= h_1^{(2)}(s,0)
\end{split}
\end{align}
for $|s| < p_3$.
\end{lmm}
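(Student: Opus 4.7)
The plan is to prove Lemma~\ref{branchh} by analytically continuing each of the four branches $h_j(s,0)$ defined near $s=0$ along suitable paths into neighborhoods of the singular points $p_\ell$, and then comparing the analytic continuations with the local branches $h_k^{(\ell)}(s,0)$ listed in \eqref{hkl}.

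First I would identify the branching structure of the algebraic function defined by $\Phi(s,h)=0$. At $s=0$ the polynomial factors as $\Phi(0,h)=-27(h-1/3)^3(h+1)$, giving a triple root at $h=1/3$ (accounting for $h_1,h_2,h_3$) and a simple root at $h=-1$ (accounting for $h_4$). At each $s=p_\ell$ the leading coefficient $256s^3-27$ vanishes and the quartic degenerates to $18h^2-8h+1=0$ with simple roots $(4\pm i\sqrt{2})/18$; these are the two finite branches $h_3^{(\ell)},h_4^{(\ell)}$, while the two remaining roots escape to infinity along the square-root singular branches $h_1^{(\ell)},h_2^{(\ell)}$. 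A direct computation of the discriminant of $\Phi$ with respect to $h$ confirms that $\{0,p_1,p_2,p_3\}$ is the entire branch locus, so analytic continuation along any path avoiding this set is unobstructed.

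Next I would carry out the identification for $\ell=3$ in detail, using the radial segment from $s=0$ to $s=p_3=3/4^{4/3}$ on the positive real axis, which avoids the other singular points $p_1,p_2$. Along this segment the coefficients of $\Phi(s,h)$ are real, so its roots come in complex-conjugate pairs. From the leading coefficients in \eqref{hk}, $h_1$ has leading imaginary part $-(2\sqrt{3}/9)s<0$ while $h_2$ has the opposite sign, and $h_3,h_4$ are real. Since $\Phi(s,0)=1$ identically, no branch can cross zero on $(0,p_3)$, so $h_3>0$ and $h_4<0$ throughout. As $s\to p_3^-$ the two non-real branches must tend to the two non-real roots of $18h^2-8h+1$: matching the sign of the imaginary part yields $h_1\to(4-i\sqrt{2})/18=h_4^{(3)}(p_3,0)$ and $h_2\to(4+i\sqrt{2})/18=h_3^{(3)}(p_3,0)$. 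The real branches $h_3,h_4$ are therefore the ones that blow up, with $h_3\to+\infty$ matching $h_1^{(3)}$ (whose leading coefficient $\tfrac{1}{2^{5/6}\sqrt{3}}$ is positive real under the convention \eqref{p3s}) and $h_4\to-\infty$ matching $h_2^{(3)}$. This yields the third column of \eqref{ghol}.

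Finally, the identifications for $\ell=1,2$ follow by exploiting the cyclic symmetry $\Phi(e^{2\pi i/3}s,h)=\Phi(s,h)$, which implies that $s\mapsto H(e^{-2\pi i/3}s)$ sends branches to branches. A short computation using the series \eqref{hk} shows that this pullback cyclically permutes $h_1\mapsto h_2\mapsto h_3\mapsto h_1$ and fixes $h_4$, while simultaneously sending local branches near $p_3$ to local branches near $p_1$, and those near $p_1$ to those near $p_2$. Applying this symmetry to the $\ell=3$ identifications produces the remaining two columns of \eqref{ghol}. I expect the main obstacle to be the careful tracking of branch choices of the factor $(p_\ell-s)^{-1/2}$: the substitution $s\mapsto e^{-2\pi i/3}s$ alters this factor by a phase that must be reconciled with the convention \eqref{p1s}--\eqref{p3s}, and the resulting sign reshuffling is precisely what accounts for the non-obvious pairings such as $h_3\mapsto h_3^{(1)}$ and $h_4\mapsto h_1^{(1)}$, rather than the naive pairings one might expect from the cyclic action alone.
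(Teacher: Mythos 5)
Your proposal is correct, and for the core case $\ell=3$ it is essentially the paper's own argument: restrict to the real segment $0<s<p_3$, use the reality of the coefficients of $\Phi$, the fact that $\Phi(s,0)=1$ and the absence of branch points on $(0,p_3)$ to conclude that the root configuration (one positive, one negative, one conjugate pair) persists, and then match signs of real and imaginary parts at both endpoints. Two remarks. First, your sign bookkeeping at $s\to 0^{+}$ is the consistent one: \eqref{hk} gives $\operatorname{Im}h_1<0$ and $\operatorname{Im}h_2>0$ for small $s>0$ (the paper's proof states these two signs the other way around, which is inconsistent with \eqref{hk} and with its own conclusion $h_1=h_4^{(3)}$, $h_2=h_3^{(3)}$; your version repairs this). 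Your explicit discriminant computation ($\operatorname{disc}_h\Phi=256(256s^3-27)(256s^3)^2$) also makes the "no collision on $(0,p_3)$" step cleaner than the paper's critical-value argument. Second, where you genuinely diverge is in the cases $\ell=1,2$: the paper simply repeats the real-ray argument along $\arg s=2\pi/3,\,4\pi/3$ (where $s^3>0$ keeps the coefficients real), whereas you invoke the symmetry $\Phi(e^{2\pi i/3}s,h)=\Phi(s,h)$. This is a legitimate and economical reduction, and the induced permutations you state ($h_1\mapsto h_2\mapsto h_3\mapsto h_1$, $h_4$ fixed; germs at $p_3$ pulled back to germs at $p_1$, etc.) are correct. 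But you leave the one nontrivial step of that reduction as an expectation rather than a computation: determining whether $h_1^{(3)}\circ\sigma$ equals $h_1^{(1)}$ or $h_2^{(1)}$ requires fixing the phase of $(e^{-2\pi i/3})^{-1/2}$ against the conventions \eqref{p1s}--\eqref{p3s}. Carrying this out (e.g.\ by noting that $h_1^{(3)}\circ\sigma$ is positive real and unbounded on the ray $\{e^{2\pi i/3}\sigma:0<\sigma<p_3\}$ near $p_1$, while \eqref{p1s} forces $h_1^{(1)}$ to be negative real there) confirms $h_1^{(3)}\circ\sigma=h_2^{(1)}$ and hence the stated pairings; you should include this verification, since it is exactly the point where a naive application of the symmetry would give the wrong answer.
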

\begin{proof}
It is sufficient to prove the relations for $t = 0$. Eliminating $s$ from the relations
\begin{align*}
\partial_h \Phi(s,h)=0, \quad \varphi=\Phi(s,h),
\end{align*}
we obtain $\varphi=(3h-1)^2$.
Then the critical values of $\Phi(s,h)$ are positive  for critical points $h\neq 1/3$ and $\Phi(s,0)=1>0$. Hence the function $\Phi(s,h)$ can be written as 
\begin{align}\label{eqphi2}
\Phi(s,h)=\left(256 s^3-27\right)(h-(\lambda_{1}+i\lambda_{2}))(h-(\lambda_{1}-i\lambda_{2}))(h-\lambda_{3})(h-\lambda_{4}) \quad (0<s<p_3),
\end{align}
where $\lambda_1,\lambda_2 \in \mathbb{R}$, $\lambda_3>0$, $\lambda_4<0$.
Comparing the third-order and first-order coefficients of \eqref{eqphi} and \eqref{eqphi2}, respectively, we obtain
\begin{align*}
0=&-( 256 s^3-27)(\lambda_3 + \lambda_4 + 2 \lambda_1),\\
0>-8=&- (256 s^3-27)((\lambda_3 + \lambda_4) ( \lambda_2^2 +  \lambda_1^2) + 2 \lambda_1 \lambda_3  \lambda_4 ).
\end{align*}
By using
\begin{align*} 
&\lambda_3+\lambda_4=-2\lambda_1,\\
&(\lambda_3 + \lambda_4) ( \lambda_2^2 +  \lambda_1^2) + 2  \lambda_3  \lambda_4 \lambda_1=-2\lambda_1( \lambda_2^2 +  \lambda_1^2- \lambda_3  \lambda_4)<0,
\end{align*}
we obtain $\lambda_1>0$ for $0<s<p_3$.
Therefore, the quartic equation $\Phi(s,h)=0$ of $h$ has one positive root, one negative root and two imaginary roots with positive real part for $0<s<p_3$.
 When $s\rightarrow +0$,
\begin{align*}
h_3(s,0)>0, \quad h_4(s,0)\rightarrow-1<0
\end{align*}
and if $0<s\ll1$, we have
\begin{align*}
\operatorname{Re}h_1(s,0)>0,\quad \operatorname{Im}h_1(s,0)>0,\quad
\operatorname{Re}h_2(s,0)>0,\quad\operatorname{Im}h_2(s,0)<0.
\end{align*}
On the other hand, when $s\rightarrow p_3-0$, we have
\begin{align*}
h_1^{(3)}(s,0) \rightarrow+\infty,& \quad h_2^{(3)}(s,0) \rightarrow-\infty,\\
\operatorname{Re} h_3^{(3)}(s,0)>0,\quad \operatorname{Im} h_3^{(3)}(s,0)>0,&\quad
\operatorname{Re} h_4^{(3)}(s,0)>0,\quad \operatorname{Im} h_4^{(3)}(s,0)<0.
\end{align*}
Comparing this with the behavior of $h$ near $s = 0$, we get
\begin{align*}
h_1(s,0)=h_4^{(3)}(s,0), \quad h_2(s,0)=h_3^{(3)}(s,0), \quad h_3(s,0)=h_1^{(3)}(s,0),\quad  h_4(s,0)=h_2^{(3)}(s,0)
\end{align*}
for $|s|<p_3$. Similarly, we obtain the relations for $\ell=1,2$.
\end{proof}
We set $g_j^{(\ell)}=h_j^{(\ell)}/x_1$ ($j=1,2,3,4; \ell=1,2,3$). 
By using Lemma \ref{branchh}, we obtain 
\begin{align}\label{ghollabel}
\begin{split}
g_1&= g_4^{(3)}= g_2^{(1)}= g_3^{(2)},\\
g_2&=g_3^{(3)}= g_4^{(1)}= g_2^{(2)},\\
g_3&=g_1^{(3)}= g_3^{(1)}= g_4^{(2)},\\
g_4&=g_2^{(3)}= g_1^{(1)}= g_1^{(2)}
\end{split}
\end{align}
for sufficiently small $|x_2|$.
\begin{thm}\label{thmpe}
If $|x_2|$ is sufficiently small, the Borel transform $\psi_{\ell,B}$ of the WKB solution is expressed in terms of $g_k$'s as 
\begin{align}\label{thmpe1}
\psi_{\ell,B}&=\frac{i}{\sqrt{\pi}}(-1)^{\ell-1}({g}_{\ell}-{g}_{4}) \qquad(\ell=1,2,3).
\end{align}
In particular, $\psi_{\ell, B}$ 's are algebraic functions and hence $\psi_{\ell}$ 's are resurgent functions. If we take a new branch cut of $\psi_{\ell, B}$ in the Borel plane as the half-line starting from $u_{\ell}$ with the positive real direction, the above relation is written in the form
\begin{align*}
\psi_{\ell, B}=\frac{i}{\sqrt{\pi}} \Delta_{u_{\ell}} g_4 \quad(\ell=1,2,3) .
\end{align*}
Here $\Delta_{u_{\ell}} g_4$ denotes the discontinuity of $g_4$ along the branch cut of $\psi_{\ell, B}$.
\end{thm}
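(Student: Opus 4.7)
The plan is to reduce the verification of \eqref{thmpe1} to the slice $t = 0$ via weighted homogeneity and then to match the leading singular expansions at each branch point $s = p_\ell$. First, both sides lie in the solution space of the rank-$3$ holonomic system $M_B$: the left-hand side by construction of the Borel transform, and the right-hand side because each branch $g_k$ of the quartic \eqref{geq} solves $M_B$ by the preceding lemma and $M_B$ is linear. Since $P_{4,B}$ forces both sides to have the same weighted homogeneity under the scaling $(x_1, x_2, y) \mapsto (\lambda^3 x_1, \lambda^2 x_2, \lambda^4 y)$, it suffices to establish the identity after multiplying by $x_1$ and restricting to $t = 0$ in the rescaled variables $(s, t) = (y/x_1^{4/3}, x_2/x_1^{2/3})$.

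On this slice, the leading singular behavior of the left-hand side at $s = p_\ell$ is read off from \eqref{WKBBorel}:
\[
x_1\psi_{\ell,B}|_{t=0} \sim -\frac{2^{1/6}}{\sqrt{3\pi}}\, e^{-2\pi i\ell/3}\, (s - p_\ell)^{-1/2}.
\]
For the right-hand side, Lemma~\ref{branchh} and the identifications \eqref{ghollabel} show that near $s = p_\ell$ the branches $g_\ell$ and $g_4$ coincide with the two singular branches $h_2^{(\ell)}/x_1$ and $h_1^{(\ell)}/x_1$ for $\ell = 1, 2$ (and with $h_1^{(3)}/x_1$ and $h_2^{(3)}/x_1$ for $\ell = 3$). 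By \eqref{hkl} these branches have opposite-sign leading $(p_\ell - s)^{-1/2}$ coefficients with common modulus $(2^{5/6}\sqrt{3})^{-1}$. Converting $(p_\ell - s)^{-1/2}$ to $(s - p_\ell)^{-1/2}$ via \eqref{p1s}--\eqref{p3s} introduces a factor of $\mp i$, and multiplying by the prefactor $\frac{i}{\sqrt{\pi}}(-1)^{\ell - 1}$ yields in every case exactly the leading coefficient $-\frac{2^{1/6}}{\sqrt{3\pi}}e^{-2\pi i\ell/3}$ matching the left-hand side.

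Since $\psi_{\ell, B}$ is characterized within the three-dimensional solution space of $M_B$ by its WKB-type singular expansion at $u_\ell$ (whose coefficients are Borel transforms of the uniquely determined WKB data $S_j^{(k), \ell}$), and the algebraic function $\frac{i}{\sqrt{\pi}}(-1)^{\ell-1}(g_\ell - g_4)$ is a solution of $M_B$ matching the leading singular data of $\psi_{\ell, B}$ at $u_\ell$, the two must coincide. The second formula $\psi_{\ell, B} = \frac{i}{\sqrt{\pi}}\Delta_{u_\ell}g_4$ then follows by reinterpreting the cut: with the new branch cut of $\psi_{\ell, B}$ taken as a half-line from $u_\ell$ in the positive real direction, the two sheets of $g$ meeting at $u_\ell$ are $g_\ell$ and $g_4$, so $g_\ell - g_4$ equals (with the sign $(-1)^{\ell - 1}$ absorbed into the orientation of the new cut) the discontinuity $\Delta_{u_\ell} g_4$. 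Algebraicity of $\psi_{\ell, B}$ and resurgence of $\psi_\ell$ then follow from the algebraicity of the four branches.

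The main obstacle is the uniqueness step just invoked: matching only the leading $(y - u_\ell)^{-1/2}$ term does not in general pin down a solution of a rank-$3$ system. A more robust route is to match leading singularities at all three points $s = p_1, p_2, p_3$: the only branches of $g$ singular at $s = p_\ell$ are $g_\ell$ and $g_4$, so the three resulting linear conditions, combined with $g_1 + g_2 + g_3 + g_4 = 0$, uniquely determine the coefficients in any expression $\psi_{\ell, B} = \sum_k c_{\ell, k} g_k$. The careful bookkeeping of branches and signs across the three points, using the $\mathbb{Z}/3$-symmetry $p_\ell = (3/4^{4/3})e^{2\pi i \ell/3}$, is the most delicate part of the argument.
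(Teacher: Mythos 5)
Your overall route is the paper's: reduce to the slice $t=0$ by weighted homogeneity, write $x_1\psi_{\ell,B}=\sum_i C_i^{(\ell)}h_i^{(\ell)}$ with constant coefficients via Theorem~\ref{thma}, determine the coefficients by comparison at $s=p_\ell$, and relabel with Lemma~\ref{branchh}; your leading-order computation and the sign bookkeeping via \eqref{p1s}--\eqref{p3s} are correct. The genuine gap is the coefficient-determination step, and your proposed repair does not close it. Matching ``leading singularities at all three points $p_1,p_2,p_3$'' requires knowing the singular behaviour of (the analytic continuation of) $\psi_{\ell,B}$ at $p_j$ for $j\neq\ell$, which is not available a priori: the defining series of $\psi_{\ell,B}$ is an expansion at $u_\ell$ only, and its continuation to $u_j$ is in fact singular there (by Theorem~\ref{thmdis} the discontinuity is $(-1)^{\ell}\psi_{j,B}\neq 0$). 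Imposing regularity at $p_j$ as the ``linear condition'' would force $c_{\ell,j}=c_{\ell,4}$ and, after normalising with $g_1+g_2+g_3+g_4=0$, would give $\psi_{\ell,B}\propto g_\ell$, contradicting \eqref{thmpe1}. The singularity data at the other two branch points is exactly the connection information the theorem is supposed to output, so using it as input is circular.

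The missing idea is to compare the \emph{full} Puiseux expansions at the single point $p_\ell$, which is what the paper does. The left-hand side \eqref{WKBBorel} is by construction a series in half-integer powers $(s-p_\ell)^{j-1/2}$ only. On the right, $h_3^{(\ell)}$ and $h_4^{(\ell)}$ are holomorphic at $p_\ell$, while $h_1^{(\ell)}$ and $h_2^{(\ell)}$ are the two colliding sheets, so their integer-power parts agree and their half-integer parts are opposite; moreover $h_1^{(\ell)}+h_2^{(\ell)}=-(h_3^{(\ell)}+h_4^{(\ell)})$ because the quartic \eqref{ueq} has no cubic term. Annihilating the integer-power part of $\sum_i C_i^{(\ell)}h_i^{(\ell)}$ therefore gives $\bigl(C_3^{(\ell)}-\tfrac12(C_1^{(\ell)}+C_2^{(\ell)})\bigr)h_3^{(\ell)}+\bigl(C_4^{(\ell)}-\tfrac12(C_1^{(\ell)}+C_2^{(\ell)})\bigr)h_4^{(\ell)}=0$, so after removing the one-dimensional redundancy $\sum_k h_k=0$ one may take $C_3^{(\ell)}=C_4^{(\ell)}=0$ and $C_1^{(\ell)}=-C_2^{(\ell)}$; your leading coefficient then fixes $C_1^{(\ell)}$ and yields \eqref{psig}. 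With that substitution the rest of your argument, including the identification of $g_\ell$ and $g_4$ as the two sheets colliding at $u_\ell$ for the discontinuity formula, goes through.
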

\begin{proof}

By using Theorem \ref{thma}, we obtain
\begin{align}\label{temp4}
x_1\psi_{\ell,B}=\sum_{i=1}^4 C_i^{(\ell)} h_i^{(\ell)}(s,t),
\end{align}
where $C_k^{(\ell)}$ $(k=1,2,3,4)$ is a constant independent of $s$ and $t$.
By comparing the coefficients of the right-hand side of \eqref{temp4} and  the right-hand side of \eqref{WKBBorel} with respect to $(s-p_\ell)$, we have 
\begin{align}\label{psig}
x_1 \psi_{1,B}\Big|_{t=0}&=-\frac{i}{\sqrt{\pi}}(h_1^{(1)}(s,0)-h_2^{(1)}(s,0)),\\
x_1 \psi_{2,B}\Big|_{t=0}&=\frac{i}{\sqrt{\pi}}(h_1^{(2)}(s,0)-h_2^{(2)}(s,0)),\\
x_1 \psi_{3,B}\Big|_{t=0}&=\frac{i}{\sqrt{\pi}}(h_1^{(3)}(s,0)-h_2^{(3)}(s,0)).
\end{align}
By using Lemma \ref{branchh}, we obtain 
\begin{align}\label{psig2}
x_1 \psi_{1,B}\Big|_{t=0}&=\frac{i}{\sqrt{\pi}}(h_1(s,0)-h_4(s,0)),\\
x_1 \psi_{2,B}\Big|_{t=0}&=-\frac{i}{\sqrt{\pi}}(h_2(s,0)-h_4(s,0)),\\
x_1 \psi_{3,B}\Big|_{t=0}&=\frac{i}{\sqrt{\pi}}(h_3(s,0)-h_4(s,0)).
\end{align}
Since $x_1 \psi_{\ell,B}$ $(\ell=1,2,3)$ are holomorphic at $t = 0$, we can obtain \eqref{thmpe1}.
\end{proof}

\section{Analytic continuation of the Borel transforms and connection formulas}
Lemma \ref{branchh} and Theorem \ref{thmpe} enable us to take analytic continuation of $\psi_{\ell,B}$ to the possible
singularities $u_{k}$ $(k\neq \ell)$. We suppose that $|x_{2}|$ is sufficiently small.  
For an analytic function germ (possibly multi-valued) $f$ at $y=u_{\ell}$, let $c^{\ast}_{\ell k}f$
denote the analytic continuation of $f$ along the segment $u_{\ell}u_{k}$ in $y$ variable.
If $f$ has a square root type singularity at $y=u_{\ell}$, we denote by $c_{\ell}^{\ast}f$ 
another branch of $f$. If $f$ is holomorphic at $y=u_{\ell}$, we set 
$c_{\ell}^{\ast}f=f$. Using \eqref{ghollabel}, we can take analytic continuation of $g^{(\ell)}_{j}$ to the possible singularity $u_{k}$:
\begin{equation*}
\left\{
\begin{split}
c_{12}^{\ast}g_{1}^{(1)}&=g^{(2)}_{1}, &c_{13}^{\ast}g^{(1)}_{1}&=g^{(3)}_{2},& c_{23}^{\ast}g^{(2)}_{1}&=g^{(3)}_{2},\\
c_{12}^{\ast}g_{2}^{(1)}&=g^{(2)}_{3}, & c_{13}^{\ast}g^{(1)}_{2}&=g^{(3)}_{4},& c_{23}^{\ast}g^{(2)}_{2}&=g^{(3)}_{3},\\
c_{12}^{\ast}g_{3}^{(1)}&=g^{(2)}_{4}, & c_{13}^{\ast}g^{(1)}_{3}&=g^{(3)}_{1},& c_{23}^{\ast}g^{(2)}_{3}&=g^{(3)}_{4},\\
c_{12}^{\ast}g_{4}^{(1)}&=g^{(2)}_{2}, & c_{13}^{\ast}g^{(1)}_{4}&=g^{(3)}_{3},& c_{23}^{\ast}g^{(2)}_{4}&=g^{(3)}_{1},
\end{split}
\right.
\end{equation*}
\begin{equation*}
\left\{
\begin{split}
c_{1}^{\ast}g^{(1)}_{1}&=g^{(1)}_{2},&c_{1}^{\ast}g^{(1)}_{2}&=g^{(1)}_{1},&c_{1}^{\ast}g^{(1)}_{3}&=g^{(1)}_{3},&
c_{1}^{\ast}g^{(1)}_{4}&=g^{(1)}_{4},\\
c_{2}^{\ast}g^{(2)}_{1}&=g^{(2)}_{2},&c_{2}^{\ast}g^{(2)}_{2}&=g^{(2)}_{1},&c_{2}^{\ast}g^{(2)}_{3}&=g^{(2)}_{3},&
c_{2}^{\ast}g^{(2)}_{4}&=g^{(2)}_{4},\\
c_{3}^{\ast}g^{(3)}_{1}&=g^{(3)}_{2},&c_{3}^{\ast}g^{(3)}_{2}&=g^{(3)}_{1},&c_{3}^{\ast}g^{(3)}_{3}&=g^{(3)}_{3},&
c_{3}^{\ast}g^{(3)}_{4}&=g^{(3)}_{4}.\\
\end{split}
\right.
\end{equation*}

We consider the case where $x_{1}^{4/3}>0$. Then $u_{k}$ is
sufficiently close to $p_{k}x_{1}^{4/3}$ ($k=1,2,3$). We take the half lines in the 
$y$-plane starting at $u_{k}$ ($k=1,2,3$)
with the positive real direction as new branch cuts of $\psi_{\ell,B}$
and choose the branch of $\psi_{\ell,B}$ near $y=u_{\ell}$ as $0\le \arg(y/x_{1}^{4/3}-u_{\ell})<2\pi$
 ($\ell=1,2,3$). For a function $f$ defined near $y=u_{\ell}$ which is analytic outside $\{u_{\ell}\}$,
 we denote by $\Delta_{u_{\ell}}f$ the discontinuity of $f$ at $u_{\ell}$. That is, we set
 \[
 \Delta_{u_{\ell}}f(y)=f(y)-f(u_{\ell}+(y-u_{\ell})e^{2\pi i})
 \]
for $y\in\{\,u_{\ell}+s\,|\, s\ge 0\,\}$. Its analytic continuation is also denoted by $\Delta_{u_{\ell}}f$. 
Here we understand that
\[
f(u_{\ell}+(y-u_{\ell})e^{2\pi i})=\lim_{\varepsilon\rightarrow+0}f(u_{\ell}+(y-u_{\ell})e^{(2\pi-\varepsilon) i}).
\]
For $\psi_{\ell,B}$, we abbreviate
$\Delta_{u_{k}}c^{\ast}_{\ell k}\psi_{\ell,B}$ ($k\neq \ell$) as $\Delta_{u_{k}}\psi_{\ell,B}$ and
$\Delta_{u_{j}}c^{\ast}_{kj}c^{\ast}_{k}c^{\ast}_{\ell k}\psi_{\ell,B}$ as $\tilde{\Delta}_{u_{j}}\psi_{\ell,B}$
for distinct $\ell, j, k$.
\begin{thm}\label{thmdis}
Fix a point $q=\left(q_1, 0\right)\left(q_1>0\right)$. For a point $x=\left(x_1, x_2\right)$ and distinct $\ell, j, k$, we assume that the point $u_{\ell}\left(t_1, t_2\right)$ does not cross the segment $u_j\left(t_1, t_2\right) u_k\left(t_1, t_2\right)$ when a point $\left(t_1, t_2\right)$ moves along the segment $q x$. Then we have
\begin{align}\label{6.1eq}
\Delta_{u_{k}}\psi_{\ell,B}&=(-1)^{\ell}\psi_{k,B},\\ \label{6.2eq}
\tilde{\Delta}_{u_{k}}\psi_{\ell,B}&=0
\end{align}
for $k, \ell=1,2,3;k\neq \ell$.
\end{thm}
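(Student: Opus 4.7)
The plan is to prove both identities by direct case analysis. Using Theorem \ref{thmpe}, which gives $\psi_{\ell,B}=(i/\sqrt{\pi})(-1)^{\ell-1}(g_\ell - g_4)$, together with the identifications \eqref{ghollabel}, I would express $\psi_{\ell,B}$ near each $u_\ell$ as a linear combination of the local branches $g_j^{(\ell)}$ ($j=1,\dots,4$). The essential feature is that only the pair $g_1^{(\ell)},g_2^{(\ell)}$ has a singularity at $u_\ell$ (of square-root type), while $g_3^{(\ell)}$ and $g_4^{(\ell)}$ are holomorphic there; so, modulo a function holomorphic at $u_\ell$, the Borel transform $\psi_{\ell,B}$ reduces to a multiple of $g_1^{(\ell)}-g_2^{(\ell)}$. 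The rest of the proof is then a finite bookkeeping using the two analytic-continuation tables (for $c^{\ast}_{\ell k}$ and $c^{\ast}_\ell$) printed just before the theorem statement.

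For \eqref{6.1eq}, I would apply the first table to transport the singular part from $u_\ell$ to a neighborhood of $u_k$ via $c^{\ast}_{\ell k}$, then compute $\Delta_{u_k}=1-c^{\ast}_k$ using the monodromy table. Since only $g_1^{(k)}$ and $g_2^{(k)}$ are acted upon nontrivially by $c^{\ast}_k$, the result collapses to a multiple of $g_1^{(k)}-g_2^{(k)}$, which equals $\pm\psi_{k,B}$ by Theorem \ref{thmpe}; tracking the signs through the tables yields the factor $(-1)^{\ell}$. For \eqref{6.2eq}, I would chain the three continuations $c^{\ast}_{\ell m}$, $c^{\ast}_m$, $c^{\ast}_{mk}$, where $m$ is the third index in $\{1,2,3\}\setminus\{\ell,k\}$. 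The key combinatorial fact, read off from the tables, is that $c^{\ast}_m$ swaps $g_1^{(m)}\leftrightarrow g_2^{(m)}$, and the subsequent transport $c^{\ast}_{mk}$ sends the resulting combination into the span of $g_3^{(k)}$ and $g_4^{(k)}$, both of which are holomorphic at $u_k$ and hence annihilated by $\Delta_{u_k}$.

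The main obstacle is not the local algebra---this is a finite and mechanical case analysis---but justifying the applicability of the continuation tables at the general point $x=(x_1,x_2)$, rather than only for $|x_2|$ small, which is the regime in which they were originally derived via Lemma \ref{branchh}. This is precisely the role of the non-crossing hypothesis in the theorem: it guarantees that as $(t_1,t_2)$ moves along the segment $qx$, the configuration of the three singularities $u_1,u_2,u_3$ in the Borel plane does not undergo a qualitative change, so the branch labels and the entries of the continuation tables persist by continuity along the deformation. Once this is in place, the two identities follow from the mechanical case analysis described above.
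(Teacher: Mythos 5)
Your proposal is correct and follows essentially the same route as the paper's own proof: express $\psi_{\ell,B}$ via Theorem \ref{thmpe} and the branch identifications \eqref{ghollabel} as $(i/\sqrt{\pi})(g^{(\ell)}_2-g^{(\ell)}_1)$ (up to sign), transport it with the two continuation tables, and use that only $g^{(k)}_1,g^{(k)}_2$ are singular at $u_k$ to evaluate $\Delta_{u_k}$, which is exactly the paper's case-by-case bookkeeping. Your remark on the role of the non-crossing hypothesis is a reasonable elaboration of what the paper leaves implicit.
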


\begin{proof}
Combinin Lemma \ref{branchh}, \eqref{ghollabel} and Theorem \ref{thmpe}, we have
\[
\psi_{1,B}=\frac{i}{\sqrt{\pi}}(g_{1}-g_{4})=\frac{i}{\sqrt{\pi}}(g_{2}^{(1)}-g_{1}^{(1)})
\]
and
\[
c^{\ast}_{13}\psi_{1,B}=\frac{i}{\sqrt{\pi}}(g_{4}^{(3)}-g_{2}^{(3)}).
\]
Since $\Delta_{u_{3}}g^{(3)}_{4}=0$ and $i/\sqrt{\pi}\Delta_{u_{3}}g^{(3)}_{2}=
i/\sqrt{\pi}\Delta_{u_{3}}g_{4}=\psi_{3,B}$,
we have \eqref{6.1eq} for $\ell=1, k=3$. 
On the other hand, we have
\[
c^{\ast}_{2}c^{\ast}_{12}\psi_{1,B}
=\frac{i}{\sqrt{\pi}}(g^{(2)}_{3}-g^{(2)}_{2})
\]
and 
\[
c^{\ast}_{23}c^{\ast}_{2}c^{\ast}_{12}\psi_{1,B}=\frac{i}{\sqrt{\pi}}(g^{(3)}_{4}-g^{(3)}_{3}).
\]
Using 
$\Delta_{u_{3}}g^{(3)}_{3}=\Delta_{u_{3}}g^{(3)}_{4}=0$, we obtain (6.2) for $\ell=1, k=3$.
Other cases can be proved similarly.
\end{proof}
Since $u_{k}$'s are roots of \eqref{ueqq}, we can track them as $x_{1}, x_{2}$ vary without
using numerical integration. 
If the paths of analytic continuation $c^{\ast}_{\ell k}$ and $c_{kj}^{\ast}c_{k}^{\ast}c_{\ell k}^{\ast}$
of $\psi_{\ell,B}$'s are deformed suitably, the discontinuity formulas \eqref{6.1eq}, \eqref{6.2eq} keep hold.
Thus we can deduce, in principle, connection formulas of WKB solutions $\psi_{\ell}$ 
across the Stokes set in a neighborhood of arbitrary generic point on the Stokes set.
The following figures show how $u_k$ ($k=1,2,3$) move as $x=(x_{1},x_{2})$ starts from $x^{(1)}=(0.15,0)$
and goes to $x^{(12)}=(0.29+0.69 i, (1+i)/2)$ along a polygonal line consisting of the segments $x^{(k)}x^{(k+1)}$
($k=1,2,\dots, 11$).
\begin{figure}[H]
  \begin{minipage}[b]{0.32\linewidth}
    \centering
\includegraphics[width=43mm]{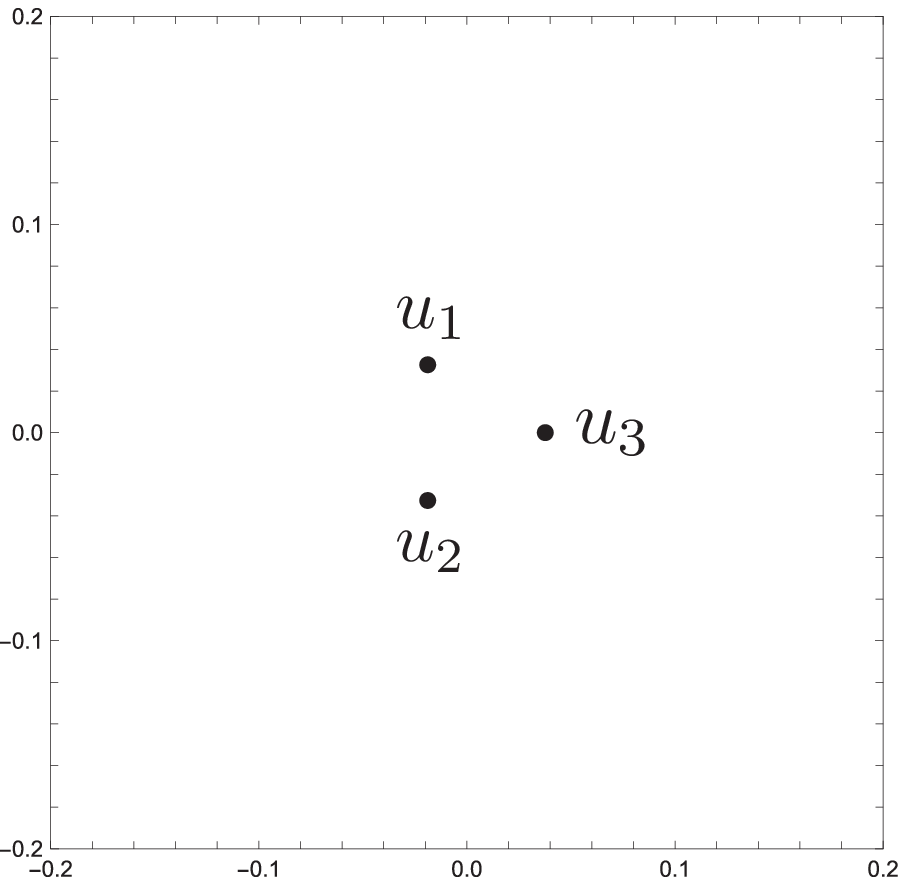}
    \caption{ $ x^{(1)}=(0.15,0)$}\label{6.1}
  \end{minipage}
  \begin{minipage}[b]{0.32\linewidth}
    \centering
\includegraphics[width=43mm]{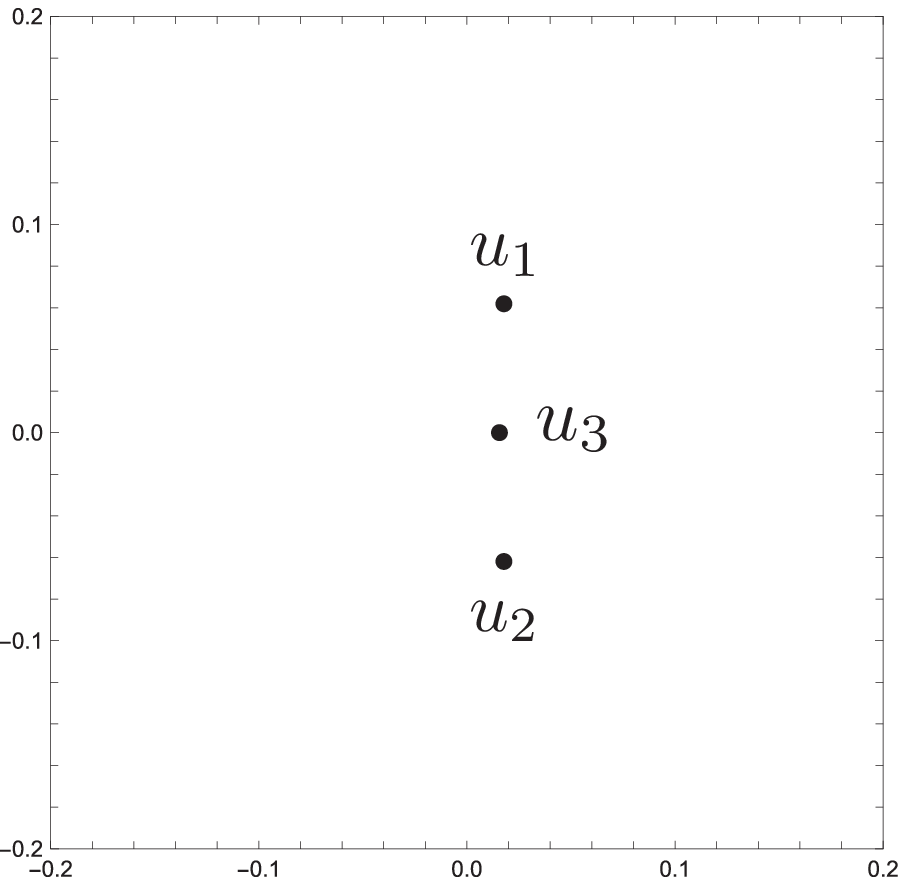}
    \caption{$x^{(2)}=(0.15,0.32)$}\label{6.2}
  \end{minipage}
    \begin{minipage}[b]{0.32\linewidth}
    \centering
\includegraphics[width=43mm]{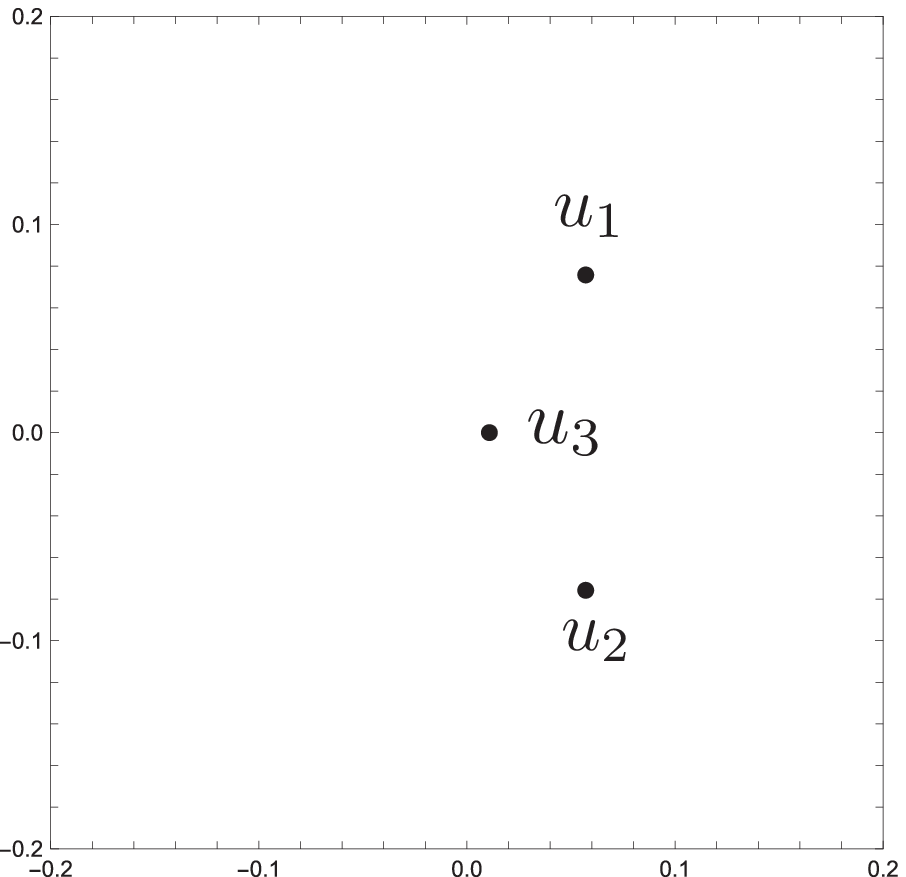}
    \caption{$x^{(3)}=(0.15,0.5)$}\label{6.3}
  \end{minipage}
\end{figure}
\begin{figure}[H]
  \begin{minipage}[h]{0.32\linewidth}
    \centering
\includegraphics[width=43mm]{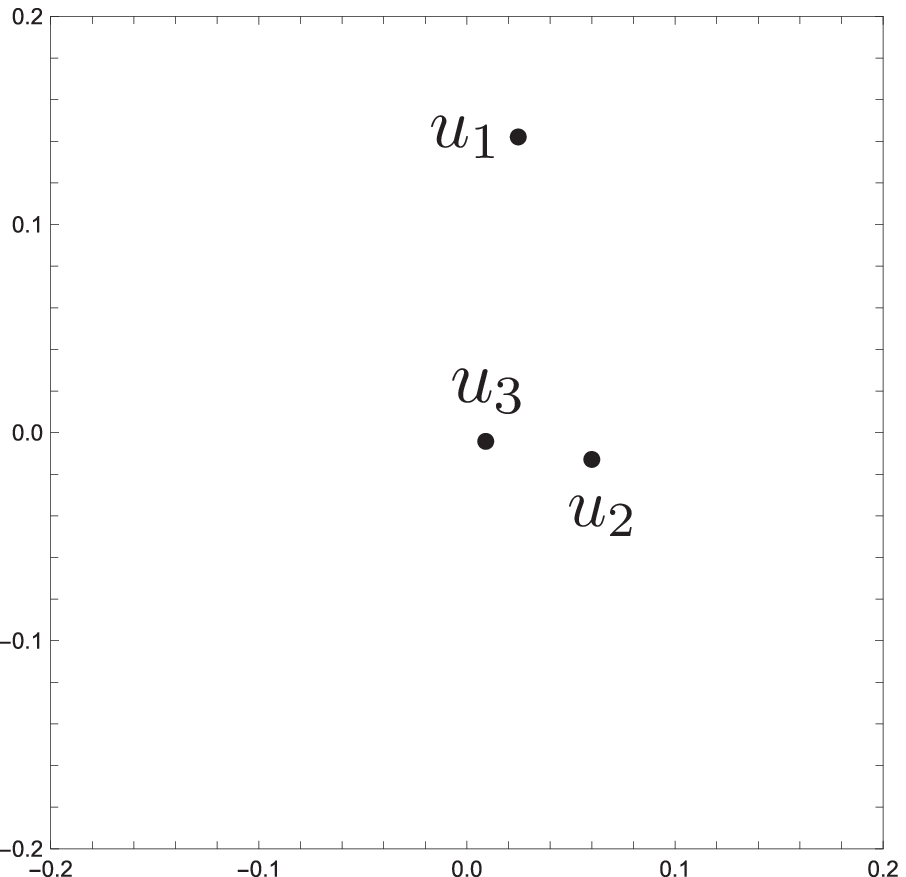}
    \caption{${x^{(4)}=(0.15,\frac{2+i}{4})}$}\label{6.4}
  \end{minipage}
  \begin{minipage}[h]{0.32\linewidth}
    \centering
\includegraphics[width=43mm]{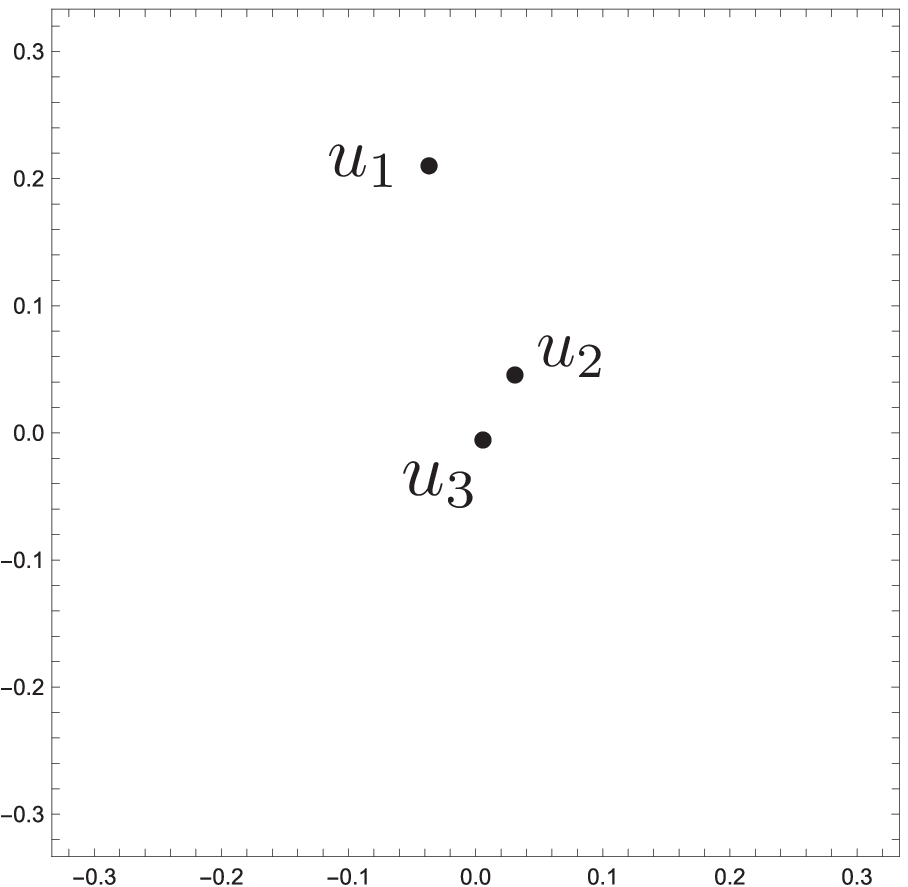}
    \caption{$x^{(5)}=(0.15,\frac{1+i}{2})$}\label{6.5}
  \end{minipage}
    \begin{minipage}[h]{0.32\linewidth}
    \centering
\includegraphics[width=43mm]{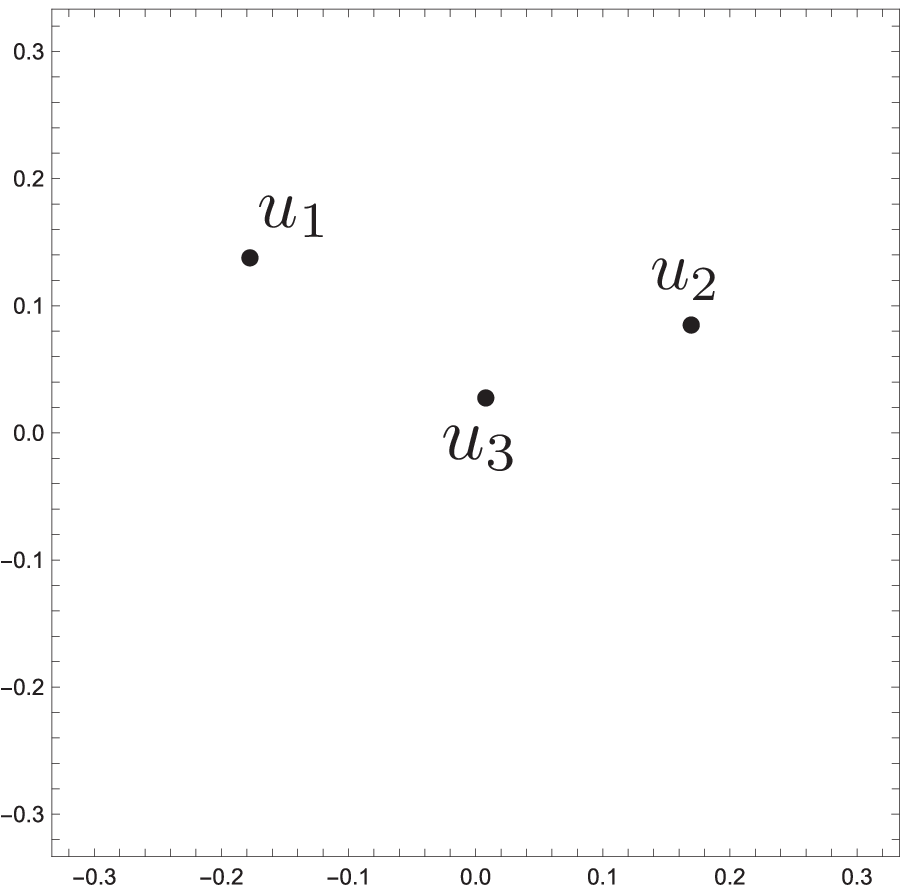}
    \caption{$x^{(6)}=(0.15+0.25i,\frac{1+i}{2})$}\label{6.6}
  \end{minipage}
\end{figure}
\begin{figure}[H]
  \begin{minipage}[h]{0.32\linewidth}
    \centering
\includegraphics[width=43mm]{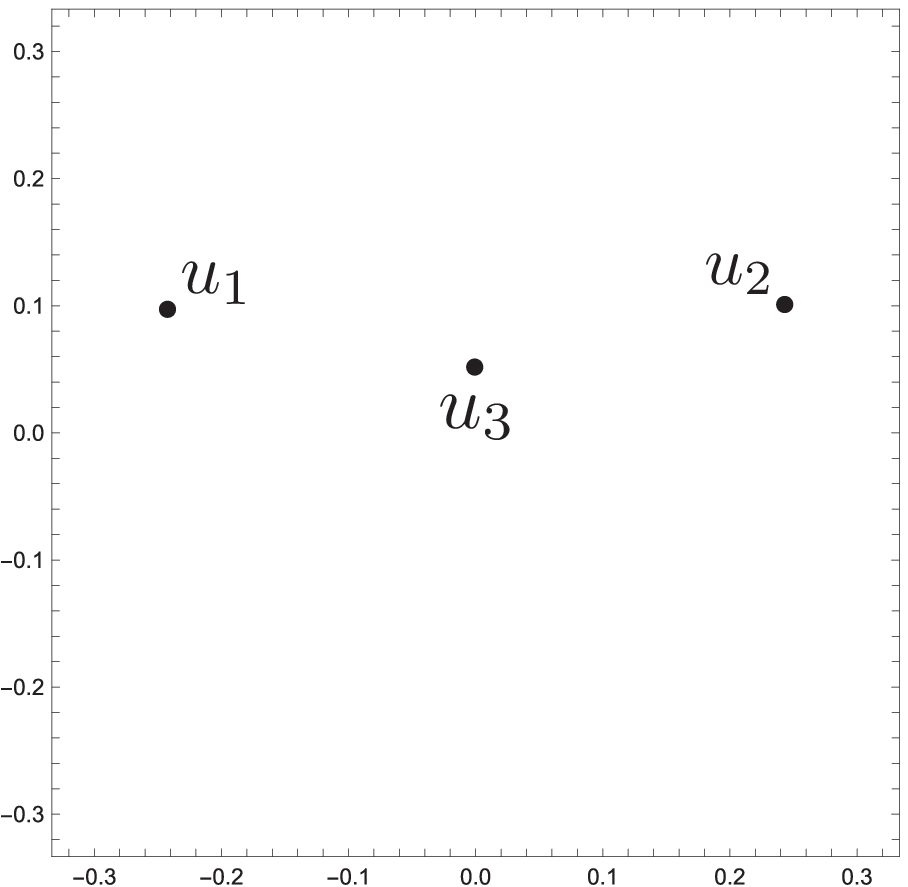}
    \caption{$x^{(7)}=(0.15+0.37i,\frac{1+i}{2})$}\label{6.7}
  \end{minipage}
  \begin{minipage}[h]{0.32\linewidth}
    \centering
\includegraphics[width=43mm]{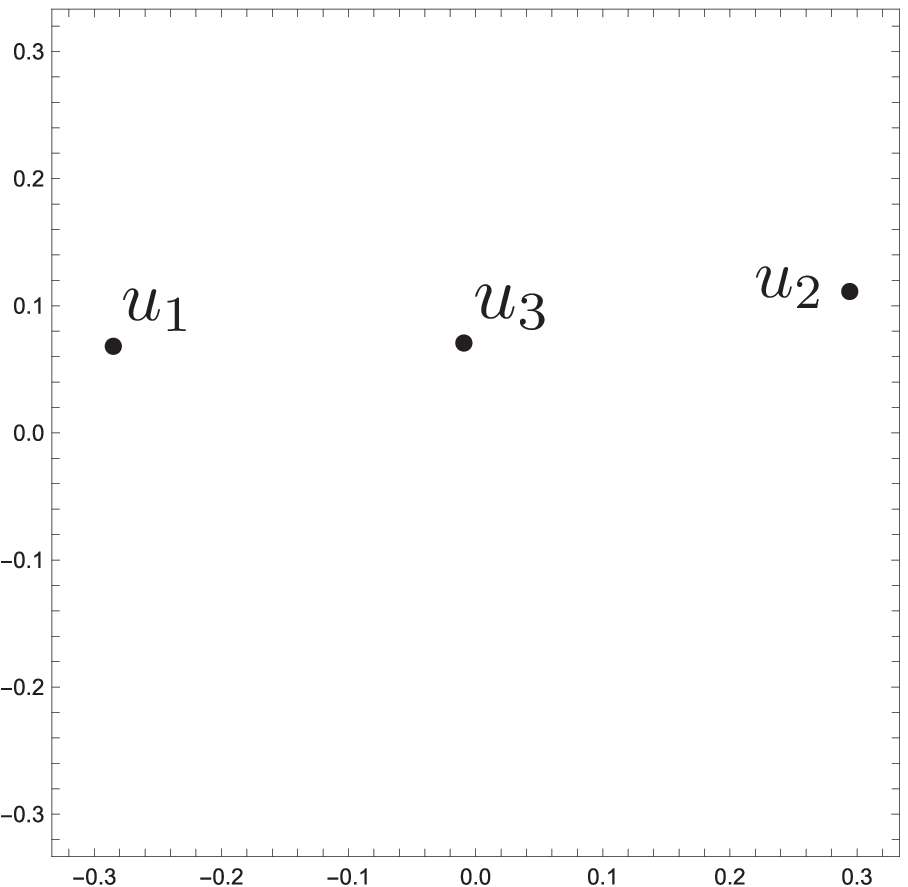}
    \caption{$x^{(8)}=(0.15+0.45i,\frac{1+i}{2})$}\label{6.8}
  \end{minipage}
    \begin{minipage}[h]{0.32\linewidth}
    \centering
\includegraphics[width=43mm]{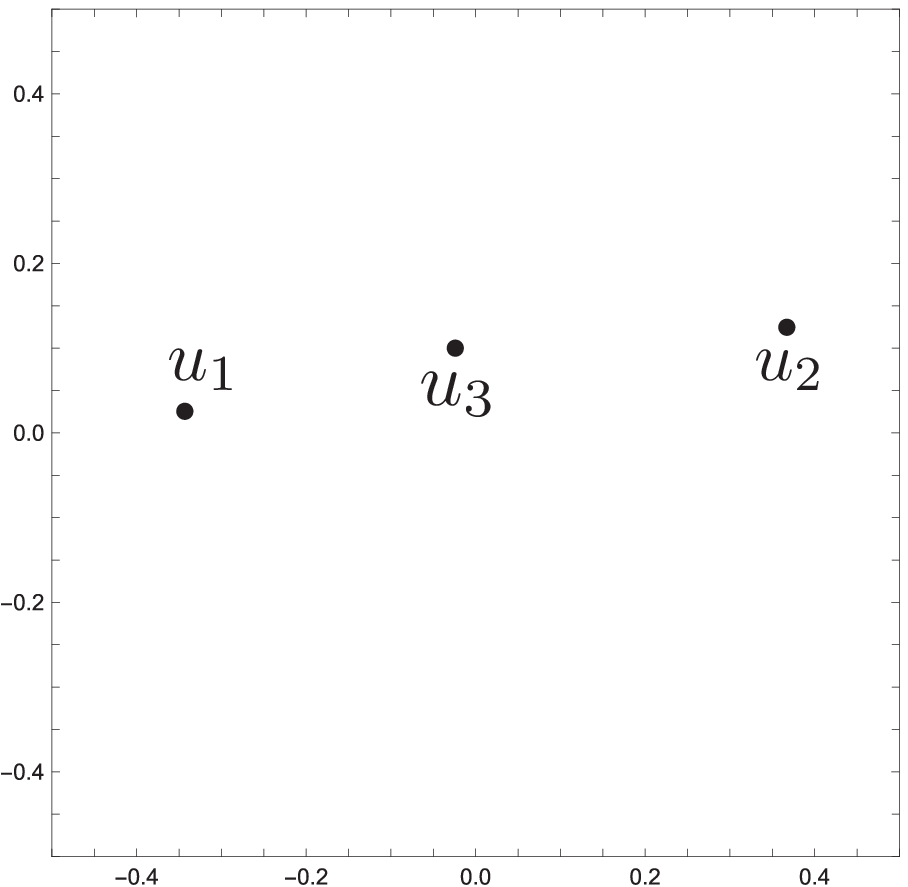}
    \caption{$x^{(9)}=(0.15+0.56i,\frac{1+i}{2})$}\label{6.9}
  \end{minipage}
\end{figure}
\begin{figure}[H]
  \begin{minipage}[h]{0.32\linewidth}
    \centering
\includegraphics[width=43mm]{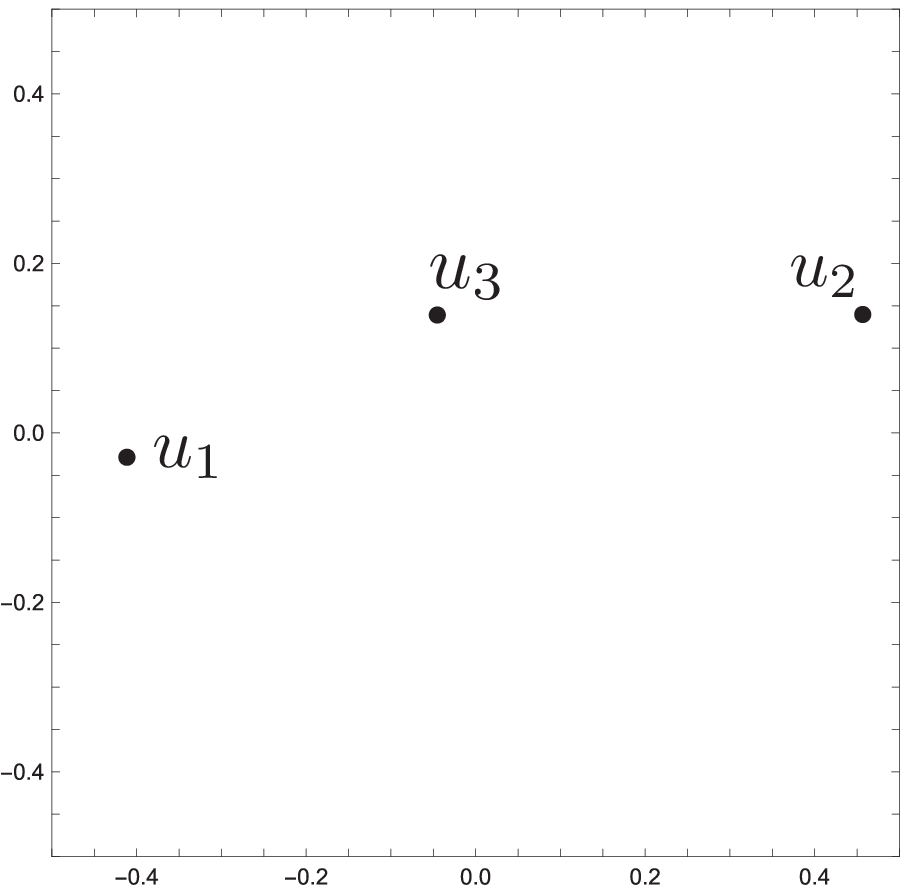}
    \caption{$x^{(10)}=(0.15+0.69i,\frac{1+i}{2})$}\label{6.10}
  \end{minipage}
  \begin{minipage}[h]{0.32\linewidth}
    \centering
\includegraphics[width=43mm]{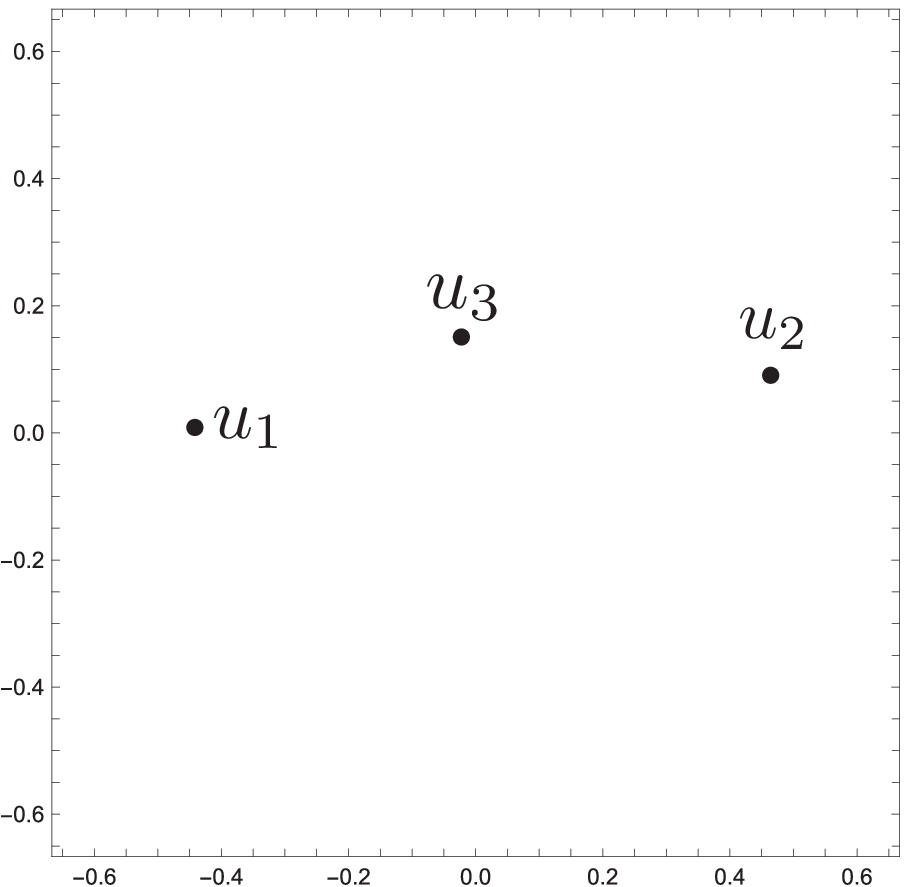}
    \caption{$x^{(11)}=(0.22+0.69i,\frac{1+i}{2})$}\label{6.11}
  \end{minipage}
    \begin{minipage}[h]{0.32\linewidth}
    \centering
\includegraphics[width=43mm]{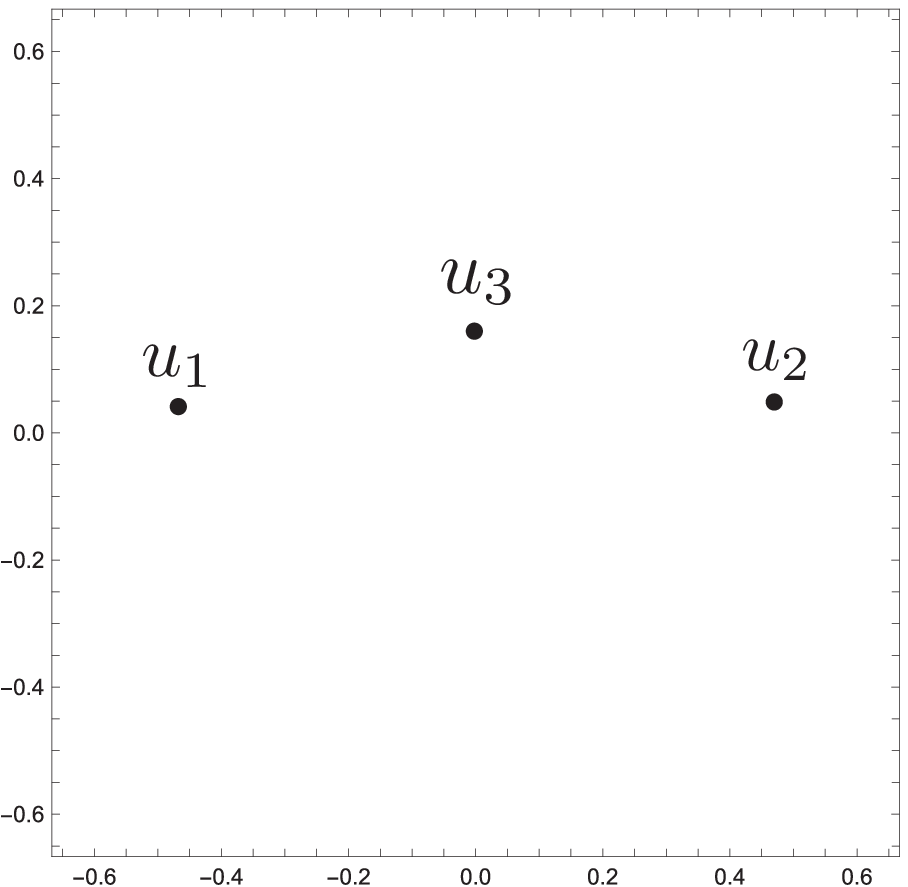}
    \caption{$x^{(12)}=(0.28+0.69i,\frac{1+i}{2})$}\label{6.12}
  \end{minipage}
\end{figure}
Figures \ref{6.13}--\ref{6.15} show the sections of the Stokes set for $x_{2}=0, (2+i)/4, (1+i)/2$, respectively.
Larger dots designate the corresponding sections of the turning point set.
In Figures \ref{6.13} and \ref{6.14}, the smaller dot is the point $x_{1}=0.15$. The values of $x^{(j)}$ ($j=1,2,\dots,12$)
are found in the captures of Figures \ref{6.1}--\ref{6.12} and $x^{(13)}=(0.45+0.69i,0.5+0.5i)$. We write $x_{1}^{(j)}$ the 
$x_{1}$-coordinate of $x^{(j)}$. The smaller dots in Figure \ref{6.15} show the
location of $x_{1}^{(j)}$  for $j=5,6,\dots,13$. Hence Figures \ref{6.13} and \ref{6.14} are corresponding to Figures \ref{6.1} and \ref{6.4}, respectively.

\begin{figure}[H]
  \begin{minipage}[h]{0.48\linewidth}
    \centering
\includegraphics[width=68mm]{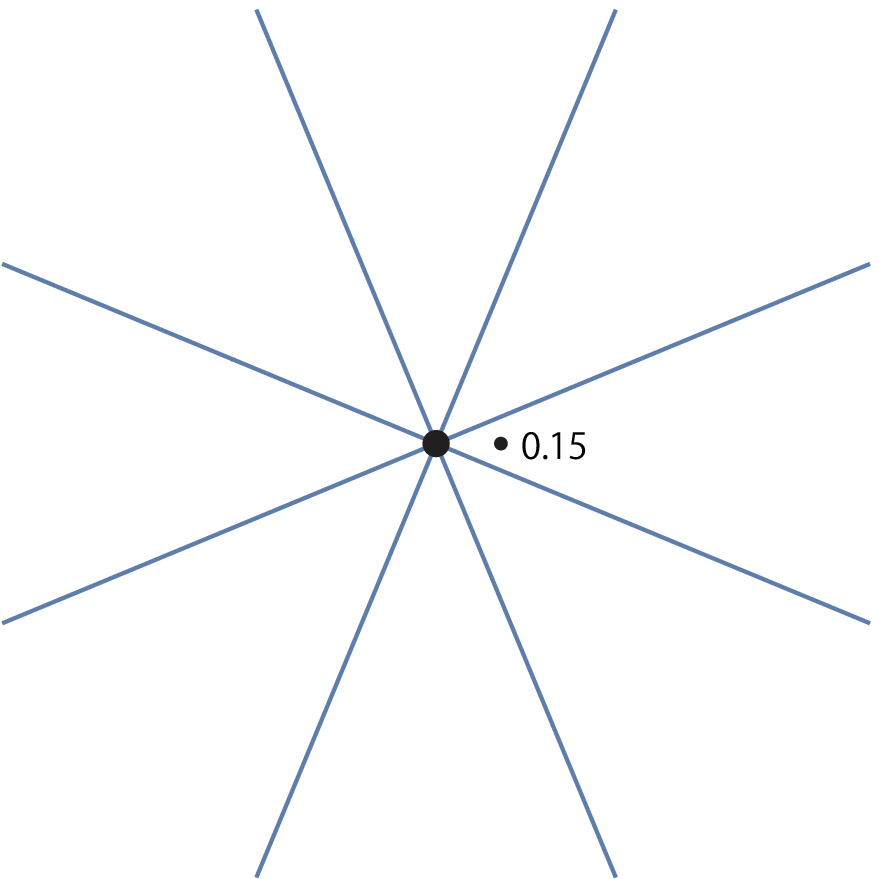}
    \caption{$x_{2}=0$}\label{6.13}
  \end{minipage}
  \begin{minipage}[h]{0.48\linewidth}
    \centering
\includegraphics[width=68mm]{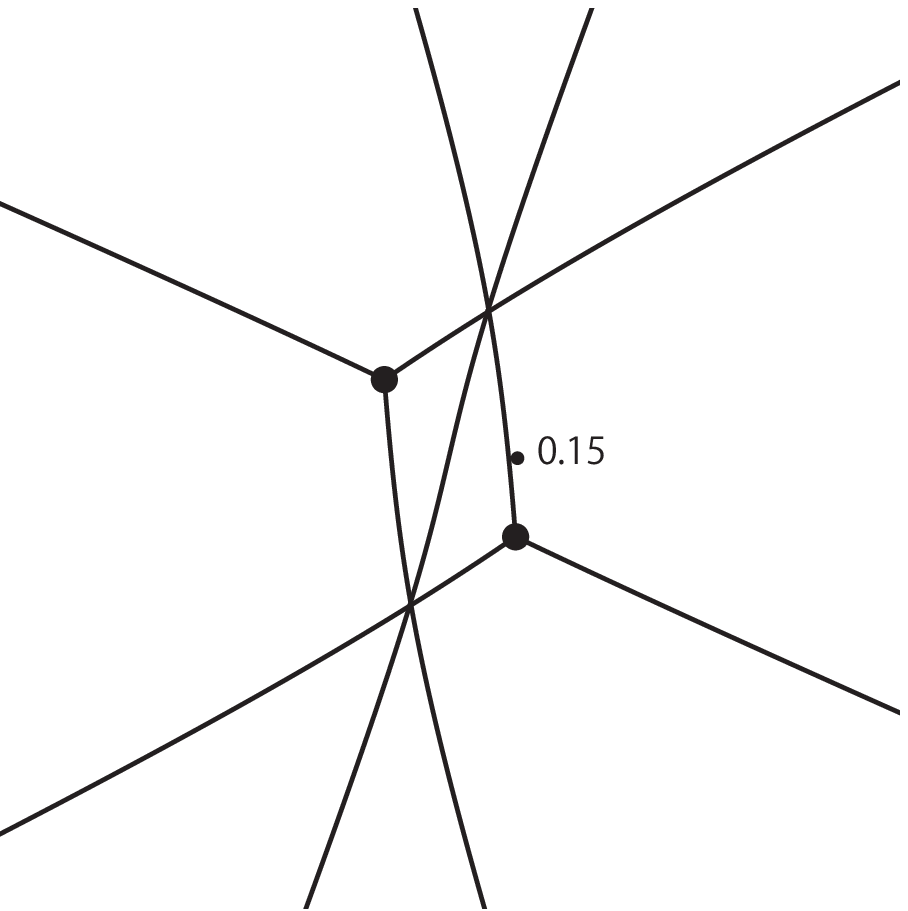}
    \caption{$x_{2}=0.5+0.25i$}\label{6.14}
  \end{minipage}
\end{figure}
\begin{figure}[H]
    \centering
\includegraphics[width=68mm]{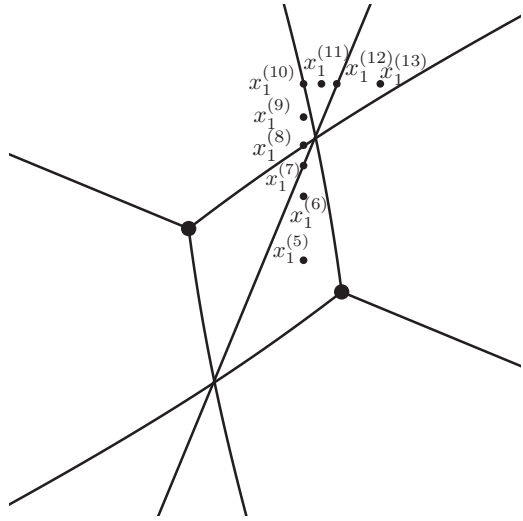}
    \caption{$x_{2}=0.5+0.5i$}\label{6.15}
\end{figure}

In Figure \ref{6.15}, the points $x^{(j)}_{1}$ ($j=5,6,\dots,12$) are corresponding to Figures \ref{6.5}--\ref{6.12}, respectively.
Figure $j$ shows the locations of $u_{k}$'s in the complex $y$-plane for
$x=x^{(j)}$ for $j=1,2,\dots,12$. If ${\rm Im}(u_{j}-u_{k})=0$ for some $j\neq k$, then
$x$ is contained in the Stokes set. 
Hence there are 5 intersection points of the Stokes set and the path of continuation which are 
close to $x=x^{(4)}, x^{(7)}, x^{(8)}, x^{(10)}, x^{(12)}$,
respectively. This implies that $x^{(1)}, x^{(2)}, x^{(3)}$ are contained in the same Stokes region,
which is denoted by ${\mathcal D}_{1}$. 
The Stokes region containing $x^{(5)}$ is denoted by ${\mathcal D}_{2}$. We write $\Psi_{\ell}^{k}$ the Borel sum of
$\psi_{\ell}$ in ${\mathcal D}_{k}$. During the analytic continuation of $\psi_{\ell,B}$ (in $x$-variable) from
$x^{(1)}$ to $x^{(5)}$, $u_{1}$ never crosses the moving segment $u_{2}u_{3}$. 
Near $x=x^{(4)}$, ${\rm Im}(u_{3}-u_{2})\sim 0$ and $\psi_{3}$ is dominant. 
Hence there are no Stokes phenomena for $\psi_{1}, \psi_{2}$ between ${\mathcal D}_{1}$ and ${\mathcal D}_{2}$.
Modifying the path of integration of the definition of $\Psi_{3}^{1}$ and using Theorem \ref{thmdis}, we have
\begin{equation*}
\left\{
\begin{split}
\Psi_{1}^{1}&=\Psi_{1}^{2},\\
\Psi_{2}^{1}&=\Psi_{2}^{2},\\
\Psi_{3}^{1}&=\Psi_{3}^{2}-\Psi_{2}^{2}.
\end{split}
\right.
\end{equation*}
Let ${\mathcal D}_{3}$ be the Stokes region containing $x=(0.15+0.4i,(1+i)/2)$. 
In the process of the analytic continuation from $x^{(1)}$ to $x^{(7)}$, $u_{3}$ crosses
the (moving) segment $u_{1}u_{2}$ once. 
It follows from Theorem \ref{thmdis} that $\Delta_{u_{2}}\psi_{1,B}=0$ for $x=x^{(7)}$. Hence we have
 \begin{equation*}
\left\{
\begin{split}
\Psi_{1}^{2}&=\Psi_{1}^{3},\\
\Psi_{2}^{2}&=\Psi_{2}^{3},\\
\Psi_{3}^{2}&=\Psi_{3}^{3}.
\end{split}
\right.
\end{equation*}
This means that between there is no Stokes phenomenon for $\psi_{\ell}$ 
between ${\mathcal D}_{2}$ and ${\mathcal D}_{3}$.
On the other hand, $u_{2}$ never crosses the segment $u_{1}u_{3}$ during the analytic continuation
from $x^{(1)}$ to $x^{(8)}$. Therefore, if we denote by ${\mathcal D}_{4}$ the Stokes region
containing $x^{(9)}$, we have
\begin{equation*}
\left\{
\begin{split}
\Psi_{1}^{3}&=\Psi_{1}^{4}-\Psi_{3}^{4},\\
\Psi_{2}^{3}&=\Psi_{2}^{4},\\
\Psi_{3}^{3}&=\Psi_{3}^{4}.
\end{split}
\right.
\end{equation*}
Let ${\mathcal D}_{5}$ and ${\mathcal D}_{6}$ denote the Stokes region containing
$x^{(11)}$ and $x^{(13)}$, respectively. Similar discussion as above
shows
\begin{equation*}
\left\{
\begin{split}
\Psi_{1}^{4}&=\Psi_{1}^{5},\\
\Psi_{2}^{4}&=\Psi_{2}^{5},\\
\Psi_{3}^{4}&=\Psi_{3}^{5}-\Psi_{2}^{5}
\end{split}
\right.
\end{equation*}
and
\begin{equation*}
\left\{
\begin{split}
\Psi_{1}^{5}&=\Psi_{1}^{6}-\Psi_{2}^{6},\\
\Psi_{2}^{5}&=\Psi_{2}^{6},\\
\Psi_{3}^{5}&=\Psi_{3}^{6}.
\end{split}
\right.
\end{equation*}
We note that $u_{2}$ never crosses the segment $u_{1}u_{3}$ during the analytic continuation, 
while $u_{3}$ crosses once again when $x$ moves from $x^{(8)}$ to $x^{(12)}$.

These connection formulas for $\psi_{\ell}$ are essentially obtained by Hirose  \cite{HiroseM,Hirose}.
He uses the local theory around a simple turning point of a higher-order ordinary differential 
equation with the large parameter (\cite{AKKoT}).
On the other hand, our discussion is totally elementary and explicit. 

As is pointed out in \cite{A2,HiroseM,Hirose}, the restriction of the Pearcey equation to $x_{2}=c$ ($c$ is a constant)
yields the equation investigated by Berk-Nevins-Roberts \cite{BNR}. The restriction of the Pearcey
system to $x_{2}=c$ is given by
\begin{equation}\label{7.2}
R_{i}\psi=0\quad (i=1,2,3),
\end{equation}
where we set $x_{1}=x$ and
\begin{align*}
\begin{split}
R_1&=(8 c^3  \eta+32 c  \eta^2) \partial_\eta
   ^2+(8 c^3 \eta x -6 c
    x  +27 \eta  x^3) \partial_x\\
   &\qquad\qquad 
 +(32 c \eta -36 \eta ^2 x^2 )\partial_\eta+4 c^3 \eta +6
   c^2 \eta ^2 x^2-9 \eta 
   x^2-2 c,\\
\end{split}\\
\begin{split}
R_2&=8 c  \eta \partial_\eta  \partial_x+(2 c^3 \eta  -4 c+9\eta  x^2 )\partial_x -12 \eta ^2 x\partial_\eta+c^2 \eta ^2
   x -3 \eta  x,\\
\end{split}\\
\begin{split}
R_3&=2 c \partial_{x}^{2}+3 x \eta\partial_{x}-4 \eta^2 \partial_{\eta}-\eta.\\
\end{split}
\end{align*}
We call \eqref{7.2} the BNR system. Restricting our discussions concerning the Pearcey system 
to $x_{2}=c$, we obtain the counterparts for the BNR system. It can be seen from the discussion in \cite[Theorem A.1.1]{HKT}, \cite{TakeiRIMS} that the WKB solutions to the BNR equation $(4\partial_x^3+2c \eta^2 \partial_x+x\eta^3)\psi=0$ $(c\neq0)$ are Borel summable under the general assumption.

\subsection*{Acknowledgements}
The authors are grateful to Professors S. Hirose, S. Izumi and Y. Takei for helpful comments and discussions.
The first and second authors are supported by JSPS KAKENHI (grant no. 18K03385).
The third author is supported by Foundation of Research Fellows, The Mathematical Society of Japan.


\begin{thebibliography}{99}
\bibitem{A2}Aoki, T., Toward the exact WKB analysis of holonomic systems (in Japanese), RIMS K\^oky\^uroku, {\bf{1433}} (2005), 1--8.
\bibitem{AKT}Aoki, T., Kawai, T. and Takei, Y., New turning points in the exact WKB
analysis for higher order ordinary differental equations, Analyse alg\'ebrique
des perturbations singuli\`eres, I, M\'ethodes r\'esurgentes, Hermann, (1994), 69--84.
\bibitem{AKKoT}Aoki, T., Kawai, T., Koike, T., Takei, Y.,  On the exact WKB analysis of operators admitting infinitely many phases, {{Adv. in Math.}}, {\bf{181}} (2004), 165--189.
\bibitem{ASU}Aoki, T., Suzuki, T. and Uchida, S., An elementary proof of the Voros connection formula for WKB solutions to the Airy equation with a large parameter, submitted; arXiv:2205.02988 [math.CA].
\bibitem{BNR}Berk, H. L., Nevins, W. M. and Roberts, K. V., New Stokes' line in WKB theory, J. Math. Phys., {\bf 23} (1982), 988--1002.
\bibitem{E} Ecalle, J., Cinq applications des fonctions r{\'e}surgentes, Prepublication d'Orsay, {\bf 84} (1984).
\bibitem{HiroseM}Hirose, S., Exact WKB analysis for a holonomic system satisfied by the Pearcey integral (in Japanese), Master's thesis, Kyoto University.
\bibitem{Hirose}Hirose, S., On the Stokes geometry for the Pearcey system and the (1,4) hypergeometric system, RIMS K\^oky\^uroku Bessatsu, {\bf{B40}}  (2013), 243--292.
\bibitem{Hirose2}Hirose, S., Exact WKB analysis for differential systems satisfied by oscillatory integrals, in preparation.
\bibitem{HKT}Honda, N., Kawai, T. and Takei, Y., Virtual Turning Points, SpringerBriefs in Mathematical Physics (2015).
\bibitem{K} Kashiwara, M., On the holonomic systems of linear differential equations, II, Inventiones mathematicae, {\bf 49} (2) (1978), 121--135.
\bibitem{KT}Kawai, T. and Takei, Y., Algebraic Analysis of Singular Perturbation Theory, American Mathematical Society (2005).
\bibitem{L} Lando, S. K., Geometry of the Stokes sets of families of functions of one variable, J. Math. Sci., {\bf 83} (4) (1997), 534--538.
\bibitem{OK} Okamoto, K. and  Kimura, H., On particular solutions of the Garnier systems and the hypergeometric functions of several variables, Quart. J. Math., {\bf 37} (1986), 61--80.
\bibitem{O} Olver, F. W. J. et al., NIST Handbook of Mathematical Functions, Cambridge University Press (2010).
\bibitem{Oaku2}  	
Oaku, T., Computation of the characteristic variety and the singular locus of a system of differential equations with polynomial coefficients, Japan J. Indust. Appl. Math., {\bf 11} (1994), 485-497. 
\bibitem{Oaku}  	
Oaku, T., Gr\"{o}bner basis and linear partial differential equations (An introduction to computational algebra analysis) (in Japanese), Sophia K\^oky\^uroku in mathematics, {\bf 38} (1994); Web revised edition, (2014).
\bibitem{Pea}
Pearcey, T., The structure of an electromagnetic field in the neighbourhood of a cusp of a caustic, Phil. Mag., {\bf 37} (1946), 311--317.
\bibitem{TakeiRIMS} Takei, Y., Integral representation for ordinary differential equations of Laplace type and exact WKB analysis (Exact steepest descent method), RIMS K\^oky\^uroku, {\bf{1168}}  (2000), 80--92.
\bibitem{V} Voros, A., The return of the quartic oscillator -- The complex WKB method,  Ann. Inst. Henri Poincar\'e, {\bf 39}  (1983), 211--338.
\end{thebibliography}
\end{document}